\documentclass[reqno]{amsart}
\usepackage{amsmath,amsthm,amssymb,amsfonts,mathrsfs,amscd}
\usepackage{multirow}
\usepackage{graphicx}
\usepackage{bm}
\usepackage{mathrsfs}
\usepackage{dsfont}
\usepackage{tikz}
\usepackage{tikz-3dplot}
\usepackage{subfig}
\usepackage{stmaryrd}

\usepackage{enumerate}

\usepackage{diagbox}
\usepackage{algorithm}
\usepackage{algorithmicx}
\usepackage{algpseudocode}
\usepackage{booktabs}
\allowdisplaybreaks[4]

\newtheorem{remark}{Remark}[section]
\newtheorem{assumption}{Assumption}[section]
\newtheorem{lemma}{Lemma}[section]
\newtheorem{theorem}{Theorem}[section]

\makeatletter
  \@addtoreset{equation}{section}
  \newcommand\figcaption{\def\@captype{figure}\caption}
  \newcommand\tabcaption{\def\@captype{table}\caption}
\makeatletter

\begin{document}

\title{A plane wave method based on approximate wave directions for two dimensional Helmholtz equations with large wave numbers}

\author{Qiya Hu}
%\thanks{hqy@lsec.cc.ac.cn}
%\and 
\author{Zezhong Wang}
%\thanks{bater1@yeah.net}
%}
\thanks{1. LSEC, ICMSEC, Academy of Mathematics and Systems Science, Chinese Academy of Sciences, Beijing
100190, China; 2. School of Mathematical Sciences, University of Chinese Academy of Sciences, Beijing 100049,
China (hqy@lsec.cc.ac.cn, bater1@yeah.net). This work was funded by Natural Science Foundation of China G12071469.}

\maketitle
{\bf Abstract.} In this paper we present and analyse a high accuracy method for computing wave directions defined in the geometrical optics ansatz of Helmholtz equation with variable wave number.
Then we define an ``adaptive" plane wave space with small dimensions, in which each plane wave basis function is determined by such an approximate wave direction.
We establish a best $L^2$ approximation of the plane wave space for the analytic solutions of homogeneous Helmholtz equations with large wave numbers and report
some numerical results to illustrate the efficiency of the proposed method.

%We apply the proposed plane wave spaces to the discretization of nonhomogeneous Helmholtz equations with variable wave numbers.

{\bf Key words.} Helmholtz equations, variable wave numbers, geometrical optics ansatz, approximate wave direction, plane wave space, best approximation

{\bf AMS subject classifications}. 65N30, 65N55.

\pagestyle{myheadings}
\thispagestyle{plain}
\markboth{}{}

\section{Introduction}
In this paper we consider the following Helmholtz equation with impedance boundary condition
\begin{equation}\label{nonhomogeneousHelm}
\left\{
	\begin{aligned}
	&\mathcal{L} u  = -(\Delta +\kappa^2(\mathbf{r})) u(\omega,\mathbf{r}) = f(\omega,\mathbf{r}), \quad \mathbf{r} = (x, y) \in\Omega, \\
	&(\partial_{\mathbf{n}} + i\kappa(\mathbf{r})) u(\omega,\mathbf{r}) = g(\omega, \mathbf{r}), \quad \mathbf{r}\in\partial\Omega,
	\end{aligned}
	\right.
\end{equation}
where $\Omega\subset \mathbb{R}^2$ is a bounded Lipchitz domain, $\mathbf{n}$ is the out normal vector on $\partial\Omega$, $f\in L^2(\Omega)$ is the source term and
$\kappa(\mathbf{r})={\omega\over c(\mathbf{r})}$,
$g\in L^2(\partial\Omega)$.
In applications, $\omega$ denotes the frequency and may be large, $c(\mathbf{r})>0$ denotes the light speed, which is usually a variable positive function.
The number $\kappa(\mathbf{r})$ is called the {\it wave number}.

Helmholtz equation is the basic model in sound propagation. It is a very important
topic to design an efficient finite element method for Helmholtz equations with large wave numbers
such that the so called {\it pollution effect} can be reduced. There are some finite element methods that can reduce the pollution effect, for example,
the $hp$ finite element methods \cite{Chen2013, DuWu2015, Wu2011, HuS2020, fem} and the plane wave methods \cite{Buffa2008,Cessenat1998,Farhat2003, Gabard2007}, \cite{Gittelson2009}-\cite{Huqy2018}, \cite{Huttunen2007, Monk1999, Perugia2016}.
 It seems that the plane wave methods have less pollution effect
for homogeneous Helmholtz equations (and time-harmonic Maxwell equations) with constant (or piecewise constant) large wave numbers
%(see \cite{Hiptmair2016})
since plane wave basis functions are solutions of a homogeneous Helmholtz equation without boundary condition and can capture at the maximum the oscillating characteristic of the analytic solution of the original Helmholtz equation. In \cite{Huqy2018}, a plane wave method combined
with local spectral elements was proposed for the discretization of the nonhomogeneous Helmholtz equation (and
time-harmonic Maxwell equations) with (piecewise) constant wave numbers. The basic ideas in this method can be described as follows. At first nonhomogeneous Helmholtz equations on small subdomains
are discretized in the space consisting of higher order polynomials, then the resulting residue Helmholtz equation (which is homogeneous on each
element) on the global solution domain is discretized by the plane wave method. By using this method, we need only to study plane wave method for Helmholtz equations that are homogeneous on every element. Then
we can simply consider the homogeneous Helmholtz equation (on each element)
\begin{equation}\label{homohelm}
(\Delta +\kappa^2(\mathbf{r})) u(\omega,\mathbf{r}) =0, \quad \mathbf{r} = (x, y) \in\Omega.
\end{equation}
Recently, the Geometric Optics Ansatz-based plane wave discontinuous Galerkin (GOPWDG) method for (\ref{homohelm}) was proposed in \cite{HuW2021}, in which a high accuracy $L^2$ error of approximate solution was
established for the case with a variable wave number $\kappa({\bf r})$.

It is known that $L^2$ errors of the finite element solutions of a usual elliptic
equation converge to zero when $h\rightarrow 0$. However, this property does not hold for all the methods mentioned above: the $L^2$ errors of the finite element solutions for Helmholtz equation
(\ref{homohelm}) (with a suitable boundary condition) do not decrease when $h$ decreases unless $h\omega\rightarrow 0$ or the number of basis functions on every element also increases.
Let the mesh sizes $h$ satisfy a basic assumption $h\omega=O(1)$, and let the number of basis functions on every element be fixed. A natural question is whether
the previous property can be kept for Helmholtz equations with large wave numbers? The answer is positive definite, provided that basis functions are constructed carefully.
In \cite{Betcke2012, Giladi2001, Fang2017, Fang2016}, plane wave type methods based on the {\it geometrical optics ansatz} were proposed to
remove the pollution effect in the sense that the $L^2$ errors of the approximate finite element solutions decrease when $\omega$ increases or $h$ decreases (assuming $h\omega=const.$).
 The key idea is to define each basis function as the form $p({\bf r})e^{i\kappa{\bf d}\cdot{\bf r}}$, where $p({\bf r})$ is a polynomial and ${\bf d}$ denotes a local approximation of a wave direction
 vector determined by the geometric optics ansatz.
In \cite{Betcke2012}, assuming that a good local approximation ${\bf d}$ of every wave direction is known (for example, it has been computed by the ray tracing technique),
the best $L^2$ approximation of the resulting finite element space was derived. An important observation made in \cite{Fang2017,Fang2016} is that the wave directions are independent of $\omega$, so
every local approximation ${\bf d}$ can be preliminarily computed by the numerical micro-local technique (NMLA) \cite{Benamou2004} based on approximate solutions of an auxiliary low-frequency problem. The approximate wave directions are incorporated as plane wave basis functions of a finite element space that is applied to the discretization of the considered high-frequency problem \cite{Fang2017, Fang2016}. It has been shown in \cite{Fang2017,Fang2016} that the $L^2$ errors of the approximate solutions are ${O}(\omega^{-1/2})$ for the case with multiple waves as $\omega\rightarrow\infty$. It is clear that the $L^2$ errors ${O}(\omega^{-1/2})$ of the approximate solutions are not small unless $\omega$ is very large, which limits the applications of the method.
%The numerical micro-local technique \cite{Benamou2004} for computing local wave directions plays a key role in \cite{Fang2017}.
The unsatisfactory $L^2$ errors come from the low accuracy ${O}(\omega^{-1/2})$ of the approximate wave directions computed by the NMLA.

In this paper, inspired by the ideas in \cite{Benamou2004}, we design a new algorithm for computing local wave directions to improve their accuracy. We show that the accuracy of the computed local wave direction ${\bf d}$
as well as the best $L^2$ approximate errors of the resulting plane wave space can achieve ${O}(\omega^{-1})$ ($={O}(h)$ if $h\omega=const.$), which is much smaller than ${O}(\omega^{-1/2})$ for a large $\omega$.
%solutions of the Helmholtz equation (\ref{homohelm}) (with a variable wave number $\kappa$) possess much better $L^2$ error  ${O}(\omega^{-1})$.
Then, by combining the ideas proposed in \cite{Huqy2018}, we apply the constructed plane wave spaces to the discretization of the nonhomogeneous Helmholtz equation (\ref{nonhomogeneousHelm}).
We test two examples to confirm that the $L^2$ errors of the resulting approximate solutions linearly decay when the mesh size $h$ decreases or $\omega$ increases (choosing $h$ such that $h\omega=const.$).

The paper is organized as follows. In Section \ref{GOPWs}, we derive basic expressions of the new plane wave type basis functions by the geometrical optics ansatz. In section 3, we describe
algorithms for approximately computing local wave directions and investigate the accuracies of the approximate wave directions and prove the best $L^2$ error of the resulting finite element spaces
. Finally, we report some numerical results on the proposed methods in Section \ref{numerical}.

\section{Geometric optics ansatz-based plane wave basis functions}\label{GOPWs}

For a given $h>0$, we divide the domain $\Omega$ into a union of quasi-uniformly polygonal elements with the size $h$. Let $\mathcal{T}_h$ denote the resulting partition.
For convenience, we separate $\omega$ from $\kappa$: $\kappa(\mathbf{r})=\omega\sqrt{\xi(\mathbf{r})}$ with $\xi(\mathbf{r})= 1/c^2(\mathbf{r})$.

\subsection{Geometric optics ansatz}\label{goa}

If the solution of the equation (\ref{homohelm}) corresponds to a simple wave, according to the geometric optics ansatz, the solution of (\ref{homohelm}) can be expressed as
the L{\"u}neberg-Kline expansion \cite{Bouche1997}:
\begin{equation}\label{geoana}
	u(\omega,\mathbf{r}) = e^{i\omega\phi(\mathbf{r})} A(\mathbf{r}) ,
\end{equation}
where $\phi$ is called the phase function satisfying the eikonal equation
\begin{equation} \label{eik}
	|\nabla\phi(\mathbf{r})|^2 = \xi(\mathbf{r}),
\end{equation}
and $A(\mathbf{r})$ is called the amplitude function that can be written as
\begin{equation}\label{ansatz}
	A(\mathbf{r}) = \sum_{s = 0}^{\infty}\frac{A_s(\mathbf{r})}{(i\omega)^s}
\end{equation}
with $\{A_s\}_{s = 0}^{\infty}$ satisfying a recursive system of PDEs:
\begin{equation} \label{tran}
	2\nabla\phi\cdot\nabla A_s + A_s \Delta\phi = -\Delta A_{s-1}
\end{equation}
for $s = 0, 1, \cdots$, with $A_{-1} \equiv 0$.

The key features of the geometric optics ansatz are:
\begin{itemize}
	\item $\{A_s\}_{s = 0}^{\infty}$ and $\phi$ are independent of the frequency $\omega$;
	\item $\{A_s\}_{s = 0}^{\infty}$ and $\phi$ depend on $c(\mathbf{r})$ (and $f(\mathbf{r})$ if (\ref{nonhomogeneousHelm}) is considered).
\end{itemize}

When more waves are involved in the solution of the equation (\ref{homohelm}), the generic solution of (\ref{homohelm}) should be locally defined as a finite sum of terms like (\ref{geoana}). Hence, in general crossing waves, we use $N(\mathbf{r})$ to denote the number of crossing waves at the position $\mathbf{r}$ and expresse the solution of the Helmholtz equation (\ref{homohelm}) as
\begin{equation}\label{geoanaN}
	u(\omega,\mathbf{r}) = \sum_{n = 1}^{N(\mathbf{r})} u_n(\omega,\mathbf{r}),
\end{equation}
where each $u_n(\omega,\mathbf{r})$ has its ansatz form as (\ref{geoana})
\begin{equation}\label{geoanan}
	u_n(\omega,\mathbf{r}) = A_n({\bf r})e^{i\omega\phi_n(\mathbf{r})}=e^{i\omega\phi_n(\mathbf{r})}\sum_{s = 0}^{\infty}\frac{A_{n,s}(\mathbf{r})}{(i\omega)^s}.
\end{equation}
And for the $n$-wave ansatz ($n = 1,\cdots,N(\mathbf{r})$), the $\omega$-independent phase function $\phi_n(\mathbf{r})$ and $\{A_{n,s}(\mathbf{r})\}_{s = 0}^{\infty}$ satisfy the eikonal equation (\ref{eik}) and the corresponding system (\ref{tran}) respectively.

\subsection{Construction of plane wave type basis functions} Recall that the solution of (\ref{homohelm}) can be written as (see (\ref{geoanaN}) and (\ref{geoanan}))
$$ u(\omega,\mathbf{r}) = \sum_{n = 1}^{N(\mathbf{r})} u_n(\omega, \mathbf{r}), \quad u_n(\omega, \mathbf{r}) = A_n(\omega,\mathbf{r})e^{i\omega\phi_n(\mathbf{r})} $$
and the phase functions $\{\phi_n(\mathbf{r})\}_{n=1}^{N(\mathbf{r})}$ are independent of $\omega$. If the phase functions $\phi_n({\bf r})$ are
known, then we need only to determine the amplitude functions $A_n(\omega, {\bf r})$, which can be approximated by finite element functions with smaller degrees of freedom.

We need only to consider the case with a single wave. For a sufficiently large $\omega$, by (\ref{geoana}) and (\ref{ansatz}) we have
$$ u(\omega, {\bf r})=A_0({\bf r})e^{i\omega\phi({\bf r})}+O(\omega^{-1}). $$
Motivated by the above expression, we define a plane wave type basis function as $\varphi(\mathbf{r})= p(\mathbf{r})e^{i\omega\tau(\mathbf{r})}$, where $\tau$ is a real polynomial approximately satisfying (\ref{eik}) and $a$ is  a complex polynomial defined by $\tau(\mathbf{r})$.

We consider a generic element $K_0$ with the diameter $h$ and the barycenter ${\bf r}_0=(x_0,y_0)$. By the Taylor formula, $\phi$ can be written as
$$ \phi({\bf r})=\phi({\bf r}_0)+(\nabla \phi)({\bf r}_0)\cdot({\bf r}-{\bf r}_0)+O(|{\bf r}-{\bf r}_0|^2). $$
Then
$$ u(\omega, {\bf r})=A_0({\bf r})e^{i\omega\phi({\bf r}_0)}e^{i\omega(\nabla \phi)({\bf r}_0)\cdot({\bf r}-{\bf r}_0)+O(\omega h^2)}+O(\omega^{-1}). $$
It follows by (\ref{eik}) that
$$ |(\nabla \phi)({\bf r}_0)|^2=\xi({\bf r}_0), $$
which means that $(\nabla \phi)({\bf r}_0)$ can be written as  $(\nabla \phi)({\bf r}_0)=\sqrt{\xi({\bf r}_0)}(\cos\theta, \sin\theta)$ with some unknown direction angle $\theta\in [0,\pi)$.
Let $\theta_h$ be a good approximation of $\theta$. A natural idea is to choose
$$ \tau_h({\bf r})=\sqrt{\xi({\bf r}_0)}(\cos\theta_h~ \sin\theta_h)\cdot({\bf r}-{\bf r}_0),$$
which can be regarded as an approximation of $\phi({\bf r})-\phi({\bf r}_0)$. It is easy to see that such $\tau_h({\bf r})$ approximately satisfies the eikonal equation in the sense that
$$  |(\nabla \tau_h)({\bf r})|^2=\xi({\bf r})+O(|{\bf r}-{\bf r}_0|),\quad {\bf r}\in K_0. $$
For this linear polynomial $\tau_h$, we look for a linear polynomial $p_h({\bf r})$ such that $p_h({\bf r})$ satisfies a similar equation with (\ref{tran}) that $A_0({\bf r})$ meets (we can omit the constant $e^{i\omega\phi({\bf r}_0)}$). Replacing $\phi$ and $A_0$ in (\ref{tran}) (for $l=0$) by $\tau_h$ and $p_h$ respectively and using the fact $\Delta \tau_h=0$, we can define a linear polynomial $p_h({\bf r})$ by
$$ \nabla \tau_h\cdot \nabla p_h=0,\quad \mbox{on}~~K_0. $$
%In the next section we will find such a linear polynomial $p({\bf r})$ to approximate $A_0({\bf r})$.
We have two independent choices of $p_h({\bf r})$ satisfying the above equation, namely,
$$ p^{\tau_h}_{1}({\bf r})=1+(\sin\theta_h~-\cos\theta_h)\cdot({\bf r}-{\bf r}_0)\quad \mbox{and}\quad p^{\tau_h}_{2}({\bf r})=1-(\sin\theta_h~-\cos\theta_h)\cdot({\bf r}-{\bf r}_0).$$
They correspond to two plane wave basis functions
$$ \varphi_1({\bf r})=p^{\tau_h}_{1}({\bf r})e^{i\omega\tau_h({\bf r})}\quad\mbox{and}\quad \varphi_2({\bf r})=p^{\tau_h}_{2}({\bf r})e^{i\omega\tau_h({\bf r})}. $$

For the case with more waves, we can similarly define plane wave-type basis functions.
%For this case, the eikonal equation has more solutions.
For an element $K_0$ with the barycenter ${\bf r}_0$, define
\begin{equation*}
	{\mathbf{d}}_n := \frac{\nabla\phi_n(\mathbf{r}_0)}{|\nabla\phi_n(\mathbf{r}_0)|} = c(\mathbf{r}_0)\nabla\phi_n(\mathbf{r}_0) \quad (n=1,\cdots, N(\mathbf{r}_0)).
\end{equation*}	
By the eikonal equation, we have $|{\mathbf{d}}_n|=1$ and so it can be written as ${\bf d}_n=(\cos\theta_{n}~\cos\theta_{n})^t$ with $\theta_{n}\in [0,\pi)$. We call ${\mathbf{d}}_n$ and $\theta_{n}$
$n$-th ray direction (or wave direction) of the wave fronts at $\mathbf{r}_0$ and the direction angle of the $n$-th ray, respectively.

Let $\theta_{h,n}$ be an approximation of $\theta_{n}$. Define
$$ {\bf d}_{h,n}=(\cos\theta_{h,n}~\cos\theta_{h,n})^t\quad \mbox{and}\quad \tau_{h,n}=\sqrt{\xi({\bf r}_0)}{\bf d}_{h,n}\cdot({\bf r}-{\bf r}_0), $$
which can be regarded as an approximation of $\phi_n(\mathbf{r})-\phi_n(\mathbf{r}_0)$.
Define
$$ p_{1}^{\tau_{h,n}}({\bf r})=1+{\bf d}^{\bot}_{h,n}\cdot({\bf r}-{\bf r}_0)\quad\mbox{and}\quad p_{h}^{\tau_{h,n}}({\bf r})=1-{\bf d}^{\bot}_{h,n}\cdot({\bf r}-{\bf r}_0) $$
which satisfy the equation
$$ \nabla\tau_{h,n}\cdot\nabla p_j^{\tau_{h,n}}=0,\quad\mbox{on}~~K_0. $$
Then we define $2N({\bf r}_0)$ plane wave basis functions on $K_0$ as follows
$$ \varphi_{n,j}(\omega,{\bf r})=p_j^{\tau_{h,n}}({\bf r})e^{i\omega\tau_{h,n}({\bf r})},\quad n=1,\cdots,N({\bf r}_0); ~j=1,2\quad({\bf r}\in K_0). $$
The core task of this article is to compute approximate direction angles $\theta_{h,n}$ ($n=1,\cdots, N({\bf r}_0)$).

\begin{remark} In most applications, there are only several rays , i.e., $N({\bf r}_0)$ is small. Then the number of the local
basis functions $\{\varphi_{n,j}\}$ is less than that of the $p-version$ of the plane wave methods, so the plane wave method with the proposed basis functions is cheaper than the $p-version$ of the plane wave methods for (\ref{homohelm}).
In the existing works \cite{Betcke2012} and \cite{Fang2017}, the factor $p_h({\bf r})$ was directly chosen as complete linear polynomials, which corresponds to three independent basis functions. Here we have used the
geometric optics ansatz (\ref{tran}) to reduce the number of basis functions on each element.
\end{remark}

\section{An adaptive plane wave method based on approximate direction angles}\label{raybasedpw}

Since the ray angles $\{\theta_{n}\}_{n = 1}^{N(\mathbf{r}_0)}$ described in Subsection 2.2
are not known in applications, we hope to find a cheap way to compute good approximations $\{\theta_{h,n}\}_{n = 1}^{N(\mathbf{r}_0)}$ of them for a large $\omega$ .
%In this section, we devoted to development of an adaptive-type plane wave method for homogeneous Helmholtz equation (\ref{homohelm}) with high-frequency ($\omega$ is large).
%Let us first describe the main ideas (refer to \cite{Benamou2004} and  \cite{Fang2016}).
An important observation made in \cite{Fang2017,Fang2016} is that the ray angles $\{\theta_{n}\}_{n = 1}^{N(\mathbf{r}_0)}$ are independent of $\omega$, so we can use an approximate solution of low-frequency problem
to compute $\{\theta_{h,n}\}_{n = 1}^{N(\mathbf{r}_0)}$.

 Choosing positive numbers $\tilde{\omega}\ll\omega$, and consider auxiliary Helmholtz equations
\begin{equation}\label{homohelm1}
\left\{\begin{aligned}
	&(\Delta +\tilde{\omega}^2\xi(\mathbf{r})) \tilde{u}(\tilde{\omega},\mathbf{r}) =0, \quad \mathbf{r} = (x, y) \in\Omega, \\
	&(\partial_{\mathbf{n}} + i\tilde{\omega}\sqrt{\xi(\mathbf{r})}) u(\tilde{\omega},\mathbf{r}) = g(\tilde{\omega}, \mathbf{r}), \quad \mathbf{r}\in\partial\Omega.
	\end{aligned}
	\right.
\end{equation}
The solution of (\ref{homohelm1}) can be written as
\begin{equation}\label{lowsolu} \tilde{u}(\tilde{\omega},\mathbf{r}) = \sum_{n = 1}^{N(\mathbf{r})} \tilde{u}_n(\tilde{\omega}, \mathbf{r}), \quad \tilde{u}_n(\tilde{\omega}, \mathbf{r}) = \tilde{A}_n(\tilde{\omega},\mathbf{r})e^{i\tilde{\omega}\phi_n(\mathbf{r})}. \end{equation}
Since $\tilde{\omega}\ll\omega$ (for example, $\tilde{\omega}=\sqrt{\omega}$), the Helmholtz equation (\ref{homohelm1}) can be numerically solved more cheaper than
the original equation (\ref{homohelm}). Let $u^{\tilde{\omega}}_h$ denote an approximate solution of (\ref{homohelm1}).

By using the approximation $u^{\tilde{\omega}}_h$, one can
probe a good approximation $\mathbf{d}_{h,n}=(\cos\theta_{h,n}, \sin\theta_{h,n})$ of the $n$-th ray direction $\mathbf{d}_n$ by the method proposed in \cite{Benamou2004}.
However, the accuracy of the resulting approximation is unsatisfactory.
%For an element $K_0$, let $\tau_{h,n}({\bf r})$ and $a^{\tau_{h,n}}_{j}({\bf r})$ ($j=1,2$) be the polynomials determined as in Subsection 2.2 by replacing the plane wave direction ${\bf d}_n$ with $\mathbf{d}_{h,n}$.  Define %the local plane wave space as
%$$ V_{ray}(K_0)=\{\sum_{n=1}^{N(\mathbf{r}_0)}a^{\tau_{h,n}}_{j}({\bf r})e^{i\omega\tau_{h,n}(\mathbf{r})}:~j=1,2\}. $$
In this section, inspired by the ideas in \cite{Benamou2004}, we will use a different method from \cite{Benamou2004} to probe approximate
ray angles $\{\theta_{h,n}\}_{n = 1}^{N(\mathbf{r}_0)}$.

Let us first introduce some common notations repeatedly used in this section.

We use $\nabla_{\mathbf{r}}$ to denote the space gradient operator with respect to the $(x,y)$ coordinate and $\nabla^2_{\mathbf{r}}$ as the Hessen operator with respect to the space variable $\mathbf{r}$.
If the applied variable only depends on space, we also simplify $\nabla_{\mathbf{r}}$ and $\nabla^2_{\mathbf{r}}$ as $\nabla$ and $\nabla^2$ respectively. Furthermore, denote
\begin{equation*}
	\nabla_{\mathbf{r}}u(\mathbf{r}_0) := (\nabla_{\mathbf{r}}u(\mathbf{r}))|_{\mathbf{r} = \mathbf{r}_0}.
\end{equation*}
Consider a reference point $\mathbf{r}_0$. For $0<\rho\ll 1$, the circle neighborhood of $\mathbf{r}_0$ with radio $\rho$ is denoted by
$$ O(\mathbf{r}_0,\rho):=\{\mathbf{r}:~|\mathbf{r} - \mathbf{r}_0|<\rho\}=\{\mathbf{r}:~\mathbf{r}=\mathbf{r}_0+ r(\cos\theta,\sin\theta),~r<\rho;0\leq\theta\leq 2\pi\}. $$

We make the following assumption on $\Omega$.

\begin{assumption}\label{mediumregion}
	We assume that $\Omega$ can be decomposed into several non-overlapping simply connected polygon region. On each region, there is only one kind of medium and $\xi$ is assumed to be sufficiently smooth. Hence $N(\mathbf{r})$ is a constant on each region.
\end{assumption}

In this section we give an adaptive ray learning method for general wave solution.

\subsection{Approximate ray angles $\{\theta_n\}$ determined from an analytic solution of the low-frequency problem}~~

%In case of multiple waves, directly using the NMLA method on the multiple wave solution $u(\omega, \mathbf{r})$ to compute the angle of the ray directions only have an error estimation order of $\omega^{-1/2}$, which greatly affects the approximation error of the adaptive ray-based methods \cite{}.

%In this section, we provide an efficient way of improving the approximation of learning ray directions for multiple waves.

For a point ${\bf r}\in O({\bf r}_0,\rho)$, we write ${\bf r}=\mathbf{r}_0 + r {\bf d}_{\theta}$ with ${\bf d}_{\theta}=(\cos\theta,\sin\theta)$. A circle centered at $\mathbf{r}_0$ with radius $\rho$ is denoted by
$$ S_{\rho}(\mathbf{r}_0) := \{\mathbf{r}:~|\mathbf{r} - \mathbf{r}_0| = \rho\}=\{\mathbf{r}:~\mathbf{r}=\mathbf{r}_0+ \rho{\bf d}_{\theta},~0\leq\theta< 2\pi\}. $$

We first recall the NMLA method developed in \cite{Benamou2004}, which can be used to compute ray directions with low accuracy.

{\bf NMLA.} Set $\tilde{\omega} = \sqrt{\omega}$ and let $\tilde{u}(\tilde{\omega}, {\bf r})$ be the solution of the low-frequency problem (\ref{homohelm1}) ($\tilde{u}(\tilde{\omega}, {\bf r})$ was represented by (\ref{lowsolu})). Choosing $r_0>0$ such that $\tilde{\omega} r_0^2 = \mathcal{O}(1)$. Define an auxiliary function (which was called impedance quantity)
\begin{equation}
	U_{\tilde{\omega}}(\theta) := (1+\frac{c(\mathbf{r}_0)}{i\tilde{\omega}}\partial_r) \tilde{u}(\tilde{\omega}, \mathbf{r}_0+ r_0\mathbf{d}_{\theta}),
%\quad \alpha_0 = \frac{1}{c(\mathbf{r}_0)}\tilde{\omega} r_0,
\end{equation}
which removes any possible ambiguity due to resonance \cite{Fang2017} and improves the robustness to noise for solutions of the Helmholtz equation.

The NMLA method sample the impedance quantity $U_{\tilde{\omega}}(\theta)$ on the circle $S_{r_0}(\mathbf{r}_0)$. To this end,
let $(\mathcal{F}U_{\tilde{\omega}})_{\ell}$ denote the $\ell$-th Fourier coefficient of $U_{\tilde{\omega}}$, namely,
\begin{equation*}
	(\mathcal{F}U_{\tilde{\omega}})_{\ell} = \frac{1}{2\pi} \int_{0}^{2\pi} U_{\tilde{\omega}}(\theta)e^{-i\ell\theta} d\theta.
\end{equation*}
Set $\tilde{\omega}_0=\frac{1}{c(\mathbf{r}_0)}\tilde{\omega} r_0$. Then apply the filtering operator $\mathcal{B}$ to the impedance quantity $U_{\tilde{\omega}}$
%and obtain
\begin{equation} \label{filter}
		\mathcal{B}U_{\tilde{\omega}}(\theta) = \frac{1}{2M_{\tilde{\omega}_0} + 1} \sum_{\ell = -M_{\tilde{\omega}_0}}^{M_{\tilde{\omega}_0}} \frac{(\mathcal{F}U_{\tilde{\omega}})_{\ell} e^{i\ell\theta}}{i^{\ell}(J_{\ell}(\tilde{\omega}_0)
 - iJ_{\ell}^{\prime}(\tilde{\omega}_0))}
\end{equation}
where $M_{\tilde{\omega}_0} = \max(1, [\tilde{\omega}_0], [\tilde{\omega}_0+(\tilde{\omega}_0^{1/3}-2.5)])$. Notice that the number of waves $N(\mathbf{r}_0)$ is just the number of sharp peaks in the graph of function $\mathcal{B} U_{\tilde{\omega}}(\theta)$.

Define $\tilde{B}_n(\tilde{\omega},\mathbf{r}) = \tilde{A}_n(\tilde{\omega},\mathbf{r}) e^{i\tilde{\omega}\phi_n(\mathbf{r}_0)}$.
It was shown in \cite{Benamou2011} that
 \begin{equation}\label{filter}
	\mathcal{B} U_{\tilde{\omega}}(\theta) = \sum_{n=1}^{N(\mathbf{r}_0)} \tilde{B}_n(\tilde{\omega},\mathbf{r}_0) S_{M_{\tilde{\omega}_0}}(\theta-\theta_{n}),
\end{equation}
where $S_{k}(\theta)=\frac{\sin ([2k+1] \theta / 2)}{[2 k+1] \sin (\theta / 2)}$. As a consequence, when $\tilde{\omega}_0\rightarrow\infty$ we have
\begin{equation} \label{filter0}
	\mathcal{B} U_{\tilde{\omega}}(\theta) =
	\left\{
	\begin{aligned}
		& \tilde{B}_n(\tilde{\omega},\mathbf{r}_0), \quad \text{if\ } \theta = \theta_n, \\
		& 0, \quad \text{otherwise}.
	\end{aligned}
	\right.
\end{equation}
For a sufficiently large $\tilde{\omega}$, we compute $\theta_n^*$ and $\tilde{B}^{*}_n(\tilde{\omega},\mathbf{r}_0)$ by
$$ \tilde{B}^{*}_n(\tilde{\omega},\mathbf{r}_0) = \mathcal{B} U_{\tilde{\omega}}(\theta_n^*)=\max\limits_{\theta\in (0,2\pi]}\mathcal{B} U_{\tilde{\omega}}(\theta). $$
Then (see \cite{Benamou2011})
 \begin{equation}
|\theta_n^{*}-\theta_n| \sim \mathcal{O}(\tilde{\omega}^{-1/2}), \quad |\tilde{B}^{*}_n(\tilde{\omega},\mathbf{r}_0)-\tilde{B}_n(\tilde{\omega},\mathbf{r}_0)| \sim \mathcal{O}(\tilde{\omega}^{-1/2}).\label{ray_angle}
\end{equation}

As we will see, the accuracy of the approximate ray angles are lower than the expected accuracy ($\mathcal{O}(\tilde{\omega}^{-2})$), which determines the approximation error of the ray-based FEM method.

To improve the accuracy of the approximate ray angle, we propose a post-processing method where the data obtained from the NMLA is used as the initial data for a routine that tries to fit some information of the wave $u(\tilde{\omega}, \mathbf{r})$.

{\bf Post-processing.}  We need to investigate how to use $\tilde{u}$ (if it is known) to learn more exact ray angles.
Take a sampling circle $S_{r_1}(\mathbf{r}_0)$ with $r_1$ satisfying $\tilde{\omega} r_1 = \mathcal{O}(1)$. Define two dual impedance quantities on the circle $S_{r_1}(\mathbf{r}_0)$ as
\begin{equation}\label{secapp}
	\left\{\begin{aligned}
	U_{\tilde{\omega}}^{+}(\theta)&=(1+\frac{c(\mathbf{r}_0)}{i\tilde{\omega}}\partial_r)\tilde{u}(\tilde{\omega},\mathbf{r}_0 + r_1\mathbf{d}_{\theta}),\\
	U_{\tilde{\omega}}^{-}(\theta)&=(1-\frac{c(\mathbf{r}_0)}{i\tilde{\omega}}\partial_r)\tilde{u}(\tilde{\omega},\mathbf{r}_0 + r_1\mathbf{d}_{\theta}).
	\end{aligned}\right.
\end{equation}

Set $\tilde{\omega}_1 = \frac{1}{c(\mathbf{r}_0)}\tilde{\omega} r_1$. Choosing a positive integer $M_{\tilde{\omega}_1}$ satisfying $M_{\tilde{\omega}_1} \geq \tilde{\omega}_1$.
For each integer $l$ satisfying $|l|\leq M_{\tilde{\omega}_1}$, define a sampling quantities by
%\begin{equation}\label{postl}
$$	\widetilde{U}_{\ell}={1\over i^{\ell}}\bigg(\frac{[\mathcal{F} U_{\tilde{\omega}}^{+}(\theta)]_{\ell}} {\frac{i\tilde{\omega}_1}{2}(J_{\ell}(\tilde{\omega}_1) - iJ_{\ell}^{'}(\tilde{\omega}_1)) + J_{\ell}(\tilde{\omega}_1)} - \frac{[\mathcal{F} U_{\tilde{\omega}}^{-}(\theta)]_{\ell}} {\frac{i\tilde{\omega}_1}{2}(J_{\ell}(\tilde{\omega}_1) + iJ_{\ell}^{'}(\tilde{\omega}_1)) - J_{\ell}(\tilde{\omega}_1)}\bigg).
$$
%\end{equation}

Before describing the post-processing, we give an expansion of $\widetilde{U}_{\ell}$.
Set
$$ \mathbf{a}_{-1}= (\frac{1}{2}, -\frac{i}{2})^T,\quad \mathbf{a}_1 = (\frac{1}{2}, \frac{i}{2})^T. $$
For $n=1,\cdots,N(\mathbf{r}_0)$;
$j=-1,1$, define the parameters
$$ b_{n,0}=\tilde{B}_n(\tilde{\omega},\mathbf{r}_0),\quad b_{n,j}=r_1\mathbf{a}_{-j}\cdot \nabla_{\mathbf{r}} \tilde{B}_n(\tilde{\omega},\mathbf{r}_0) $$
and
$$ b_{n,2j}=r_1c(\mathbf{r}_0)\tilde{B}_n(\tilde{\omega},\mathbf{r}_0)\mathbf{a}_{-j}^T\nabla^2\phi_n(\mathbf{r}_0)\mathbf{a}_{-j}, $$
where $\tilde{B}_n(\tilde{\omega},\mathbf{r})=\tilde{A}_n(\tilde{\omega},\mathbf{r})e^{i\omega\phi_n(\mathbf{r}_0)}$ (see the last part). Besides, for $|\ell|\leq M_{\tilde{\omega}_1}$ we define
$$ \mu_{\ell,0}=\frac{2J_{\ell}^2(\tilde{\omega}_1)}{a_{\ell}^{+} a_{\ell}^{-}},~~
\mu_{\ell,j}=j \frac{\tilde{\omega}_1(J_{\ell+j}^2(\tilde{\omega}_1)+J_{\ell}^2(\tilde{\omega}_1)) - 2(\ell+j)J_{\ell+j}(\tilde{\omega}_1)J_{\ell}(\tilde{\omega}_1)}{a_{\ell}^{+} a_{\ell}^{-}} \quad (j=\pm 1)$$
and
$$ \mu_{\ell,2j}=ij(l+j)\tilde{\omega}_1 \frac{J^2_{\ell+j}(\tilde{\omega}_1) - J_{\ell+2j}(\tilde{\omega}_1)J_{\ell}(\tilde{\omega}_1)}{a_{\ell}^{+} a_{\ell}^{-}}\quad (j=\pm 1). $$
Here
$$ a_{\ell}^{+}=\frac{i\tilde{\omega}_1}{2}(J_{\ell}(\tilde{\omega}_1) - iJ_{\ell}^{'}(\tilde{\omega}_1)) + J_{\ell}(\tilde{\omega}_1),~~a_{\ell}^{-}=\frac{i\tilde{\omega}_1}{2}(J_{\ell}(\tilde{\omega}_1) + iJ_{\ell}^{'}(\tilde{\omega}_1)) - J_{\ell}(\tilde{\omega}_1). $$

\begin{lemma} For each integer $l$ satisfying $|l|\leq M_{\tilde{\omega}_1}$, the number $\widetilde{U}_{\ell}$ has the expansion for large $\tilde{\omega}$
\begin{equation}
\widetilde{U}_{\ell}\label{multisample}
= \sum_{n=1}^{N(\mathbf{r}_0)}\sum_{k=-2}^{2}b_{n,k}\mu_{\ell,k}e^{-i(l+k)\theta_n}.
\end{equation}
%\begin{eqnarray}
%	\widetilde{U}_{\ell}\label{multisample}
%	& = &\sum_{n=1}^{N(\mathbf{r}_0)} \bigg(\mu_{0,\ell}b_{n,0} e^{-i\ell\theta_n}+\sum_{j=-1,1}
%    \mu_{j,\ell}b_{n,j} e^{-i(\ell+j)\theta_n} \cr
%	&+&\sum_{j=-1,1}\mu_{2j,\ell}b_{n,2j} e^{-i(l+2j)\theta_n}\bigg)+\mathcal{O}(\tilde{\omega}^{-2}),
%\end{eqnarray}
%where $\mu_{j,\ell}$ ($|\ell|\leq M_{\tilde{\omega}_1}$, $|j|\leq 2$) are
%constants that are independent of $\tilde{u}$ and can be explicitly computed by $\tilde{\omega}_1$.
\end{lemma}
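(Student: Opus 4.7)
The plan is to insert the ansatz (\ref{lowsolu}) into the dual impedance quantities $U_{\tilde\omega}^\pm$ of (\ref{secapp}), Taylor-expand each summand around $\mathbf r_0$ to the order dictated by the scaling $\tilde\omega r_1=\mathcal O(1)$, and then extract the $\ell$-th Fourier coefficients via the Jacobi--Anger identity. Forming the combination that defines $\widetilde U_\ell$ and collapsing the resulting Bessel expressions through the standard recurrences will produce exactly (\ref{multisample}) and identify the $\mu_{\ell,k}$.

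First I would use the chain rule on $\tilde u_n(\tilde\omega,\mathbf r_0+r\mathbf d_\theta)=\tilde B_n(\tilde\omega,\mathbf r_0+r\mathbf d_\theta)e^{i\tilde\omega(\phi_n(\mathbf r_0+r\mathbf d_\theta)-\phi_n(\mathbf r_0))}$ to rewrite
\begin{equation*}
\tfrac{c(\mathbf r_0)}{i\tilde\omega}\partial_r\tilde u_n=c(\mathbf r_0)(\mathbf d_\theta\cdot\nabla\phi_n)\tilde u_n+\tfrac{c(\mathbf r_0)}{i\tilde\omega}(\mathbf d_\theta\cdot\nabla\tilde B_n)\,e^{i\tilde\omega(\phi_n-\phi_n(\mathbf r_0))}.
\end{equation*}
Since $c(\mathbf r_0)\nabla\phi_n(\mathbf r_0)=(\cos\theta_n,\sin\theta_n)$, we have $c(\mathbf r_0)\mathbf d_\theta\cdot\nabla\phi_n(\mathbf r_0)=\cos(\theta-\theta_n)$, and Taylor-expanding $\phi_n$ to second order, $\nabla\phi_n$ to first order, and $\tilde B_n$ to first order at $r=r_1$ yields, up to $\mathcal O(\tilde\omega^{-2})$ terms,
\begin{equation*}
U_{\tilde\omega}^\pm(\theta)=\sum_{n=1}^{N(\mathbf r_0)}e^{i\tilde\omega_1\cos(\theta-\theta_n)}\,\Pi_n^\pm(\theta),
\end{equation*}
where $\Pi_n^\pm$ is a trigonometric polynomial in $e^{\pm i\theta},\,e^{\pm 2i\theta}$. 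The decompositions $\mathbf d_\theta=e^{i\theta}\mathbf a_{-1}+e^{-i\theta}\mathbf a_1$ and
\begin{equation*}
\mathbf d_\theta^T\nabla^2\phi_n\mathbf d_\theta=e^{2i\theta}\mathbf a_{-1}^T\nabla^2\phi_n\mathbf a_{-1}+2\,\mathbf a_{-1}^T\nabla^2\phi_n\mathbf a_1+e^{-2i\theta}\mathbf a_1^T\nabla^2\phi_n\mathbf a_1,
\end{equation*}
together with the prescribed definitions of $b_{n,k}$, then identify the coefficient of $e^{ik\theta}$ in $\Pi_n^\pm(\theta)$ as an explicit, $\pm$-dependent linear combination of $b_{n,-2},\ldots,b_{n,2}$. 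Applying Jacobi--Anger, $e^{iz\cos\alpha}=\sum_m i^mJ_m(z)e^{im\alpha}$, next converts $[\mathcal F U_{\tilde\omega}^\pm]_\ell$ into a double sum over $n$ and $k\in\{-2,\ldots,2\}$ of terms of the shape $b_{n,k}\cdot(\text{Bessel factor})\cdot e^{-i(\ell+k)\theta_n}$.

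The main obstacle is the final algebraic collapse. Forming $\widetilde U_\ell=i^{-\ell}\bigl([\mathcal F U^+]_\ell/a_\ell^+-[\mathcal F U^-]_\ell/a_\ell^-\bigr)$ produces, for each $b_{n,k}$, a rational expression over the common denominator $a_\ell^+a_\ell^-$, and one must verify it reduces to the stated $\mu_{\ell,k}$. For $k=0$ a direct cross-multiplication of the numerators $(J_\ell\mp iJ_\ell')a_\ell^\mp$ collapses to $\pm 2J_\ell^2$, giving $\mu_{\ell,0}$. For $k=\pm 1$ the derivative terms $J_m'(\tilde\omega_1)$ must be eliminated via $2J_m'=J_{m-1}-J_{m+1}$ and then consolidated with $2mJ_m=\tilde\omega_1(J_{m-1}+J_{m+1})$ to reach the form $\tilde\omega_1(J_{\ell+j}^2+J_\ell^2)-2(\ell+j)J_{\ell+j}J_\ell$. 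For $k=\pm 2$ the quadratic phase correction $i(\tilde\omega r_1^2/2)\tilde B_n\mathbf d_\theta^T\nabla^2\phi_n\mathbf d_\theta$ must be added to the cross term $\pm c(\mathbf r_0)r_1\tilde B_n\mathbf d_\theta^T\nabla^2\phi_n\mathbf d_\theta$ generated by the impedance operator acting on the first-order phase Taylor correction; after the same recurrences these merge into the Wronskian-like numerator $ij(\ell+j)\tilde\omega_1(J_{\ell+j}^2-J_{\ell+2j}J_\ell)$. The bookkeeping is heavy but mechanical once the Bessel identities are applied consistently, and grouping the surviving contributions by $k$ yields (\ref{multisample}).
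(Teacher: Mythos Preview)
Your proposal is essentially the paper's own proof: Taylor-expand the ansatz to second order in $r$, write $\mathbf d_\theta=e^{i\theta}\mathbf a_{-1}+e^{-i\theta}\mathbf a_1$, apply Jacobi--Anger to pick off Fourier modes, and then collapse the resulting Bessel combinations in $\widetilde U_\ell$ via the standard recurrences. The organization and the identities you name are exactly those used in the paper.

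One point needs correction. You assert that the coefficient of $e^{ik\theta}$ in $\Pi_n^\pm(\theta)$ is a linear combination of $b_{n,-2},\ldots,b_{n,2}$. This is not true at $k=0$: the zeroth harmonic of $\mathbf d_\theta^T\nabla^2\phi_n(\mathbf r_0)\mathbf d_\theta$ is $2\,\mathbf a_{-1}^T\nabla^2\phi_n(\mathbf r_0)\mathbf a_1$, which is \emph{not} one of the $b_{n,k}$. In the paper this extra contribution appears in $i^{-\ell}[\mathcal F U^\pm]_\ell$ with numerator exactly $a_\ell^\pm$, so that after dividing by $a_\ell^\pm$ the two pieces become $1$ and $1$ and cancel in $\widetilde U_\ell=i^{-\ell}([\mathcal F U^+]_\ell/a_\ell^+-[\mathcal F U^-]_\ell/a_\ell^-)$. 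The paper singles out precisely this cancellation (Remark~3.1) as the reason for introducing the \emph{pair} of impedance quantities. Your phrase ``surviving contributions'' hints that you anticipate something like this, but you should make the mechanism explicit, since without it the $k=0$ bucket would contain an uncontrolled term and the expansion (\ref{multisample}) would not close on the five parameters $b_{n,k}$.
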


\begin{proof} By the Taylor expansion on $r\in (0,\tilde{\omega}^{-1}]$, we have
\begin{eqnarray*}
&&\tilde{A}_n(\tilde{\omega}, \mathbf{r})
	 = \tilde{A}_n(\tilde{\omega},\mathbf{r}_0 + r\mathbf{d}_{\theta}) = \tilde{A}_n(\tilde{\omega},\mathbf{r}_0) + r \nabla_{\mathbf{r}} \tilde{A}_n(\tilde{\omega},\mathbf{r}_0) \cdot \mathbf{d}_{\theta} + \mathcal{O}(r^2), \cr
&&\phi_n(\mathbf{r})
	 = \phi_n(\mathbf{r}_0 + r\mathbf{d}_{\theta}) = \phi_n(\mathbf{r}_0) + r \frac{\mathbf{d}_n \cdot \mathbf{d}_{\theta}}{c(\mathbf{r}_0)}+ \frac{r^2}{2}\mathbf{d}_{\theta}^T\nabla^2\phi_n(\mathbf{r}_0)\mathbf{d}_{\theta} + \mathcal{O}(r^3).
\end{eqnarray*}
By the definition of $\tilde{u}$ and using the above expansions, we deduce that
\begin{eqnarray*}
\tilde{u}(\tilde{\omega},\mathbf{r}_0 + r\mathbf{d}_{\theta}) &=& \sum_{n=1}^{N(\mathbf{r}_0)} \big(\tilde{B}_n(\tilde{\omega},\mathbf{r}_0) + r \nabla_{\mathbf{r}} \tilde{B}_n(\tilde{\omega},\mathbf{r}_0) \cdot \mathbf{d}_{\theta} \cr
&+&i\tilde{\omega}\frac{r^2}{2} \tilde{B}_n(\tilde{\omega},\mathbf{r}_0)\mathbf{d}_{\theta}^T\nabla^2\phi_n(\mathbf{r}_0)\mathbf{d}_{\theta}\big)e^{i \frac{\tilde{\omega} r}{c(\mathbf{r}_0)} \mathbf{d}_n \cdot \mathbf{d}_{\theta}}+\mathcal{O}(\tilde{\omega}^{-2}).
\end{eqnarray*}
Then we have
\begin{eqnarray*}
%\label{imp1}
	&&U_{\tilde{\omega}}^{+}(\theta)
	= \sum_{n=1}^{N(\mathbf{r}_0)}\bigg\{\tilde{B}_n(\tilde{\omega},\mathbf{r}_0)(1+\mathbf{d}_n \cdot \mathbf{d}_{\theta}) + r_1\nabla_{\mathbf{r}} \tilde{B}_n(\tilde{\omega},\mathbf{r}_0)\cdot \mathbf{d}_{\theta} (1+\mathbf{d}_n \cdot \mathbf{d}_{\theta} + \frac{1}{i\tilde{\omega}_1}) \cr
	&&+ r_1\tilde{B}_n(\tilde{\omega},\mathbf{r}_0)\mathbf{d}_{\theta}^T\nabla^2\phi_n(\mathbf{r}_0)\mathbf{d}_{\theta} [ \frac{i\tilde{\omega} r_1}{2}(1+\mathbf{d}_n \cdot \mathbf{d}_{\theta}) + c(\mathbf{r}_0) ] \bigg\}e^{i\tilde{\omega}_1 \mathbf{d}_n \cdot \mathbf{d}_{\theta}}+\mathcal{O}(\tilde{\omega}^{-2})
\end{eqnarray*}
and
\begin{eqnarray*}
%\label{imp2}
	&&U_{\tilde{\omega}}^{-}(\theta)= \sum_{n=1}^{N(\mathbf{r}_0)}\bigg\{\tilde{B}_n(\tilde{\omega},\mathbf{r}_0)(1-\mathbf{d}_n \cdot \mathbf{d}_{\theta}) + r_1\nabla_{\mathbf{r}} \tilde{B}_n(\tilde{\omega},\mathbf{r}_0)\cdot \mathbf{d}_{\theta} (1-\mathbf{d}_n \cdot \mathbf{d}_{\theta} - \frac{1}{i \tilde{\omega}_1}) \cr
	&&+ r_1\tilde{B}_n(\tilde{\omega},\mathbf{r}_0)\mathbf{d}_{\theta}^T\nabla^2\phi_n(\mathbf{r}_0)\mathbf{d}_{\theta} [ \frac{i\tilde{\omega} r_1}{2}(1-\mathbf{d}_n \cdot \mathbf{d}_{\theta}) - c(\mathbf{r}_0) ] \bigg\}e^{i\tilde{\omega}_1\mathbf{d}_n \cdot \mathbf{d}_{\theta}}+\mathcal{O}(\tilde{\omega}^{-2}).
\end{eqnarray*}
Substituting $\mathbf{d}_{\theta} = e^{i\theta}\mathbf{a}_{-1}+ e^{-i\theta}\mathbf{a}_1$ into the above two equalities and using the 2-D Jacobi-Anger expansion
$$e^{i\tilde{\omega}_1\mathbf{d}_n \cdot \mathbf{d}_{\theta}} = \sum_{\ell=-\infty}^{\infty} i^{\ell}J_{\ell}(\tilde{\omega}_1) e^{-i\ell\theta_n+i\ell\theta},$$
we can directly verify that the coefficients of two Fourier transformation are
\begin{eqnarray*}
%\label{imp1}
	&&\frac{1}{i^{\ell}}[\mathcal{F} U_{\tilde{\omega}}^{+}(\theta)]_{\ell}
	= \mathcal{O}(\tilde{\omega}^{-2}) + \sum_{n=1}^{N(\mathbf{r}_0)}\bigg\{b_{n,0} (J_{\ell}(\tilde{\omega}_1) - iJ^{'}_{\ell}(\tilde{\omega}_1))e^{-i\ell\theta_n} \cr
	&&+ \sum_{j = -1, 1} b_{n, j}[(1+\frac{1}{i\tilde{\omega}_1})J_{\ell+j}(\tilde{\omega}_1) - iJ^{'}_{\ell+j}(\tilde{\omega}_1)] e^{-i(\ell + j)\theta_n} \cr
	&&{\color{blue}{+ 2r_1c(\mathbf{r}_0)\tilde{B}_n(\tilde{\omega},\mathbf{r}_0)\mathbf{a}_{-1}^T\nabla^2\phi_n(\mathbf{r}_0)\mathbf{a}_{1}  [ (\frac{i\tilde{\omega}_1}{2} + 1)J_{\ell}(\tilde{\omega}_1) + \frac{\tilde{\omega}_1}{2} J^{'}_{\ell}(\tilde{\omega}_1)] e^{-i\ell\theta_n}}} \cr
	&&+ \sum_{j=-1, 1} b_{n, 2j}[(\frac{i\tilde{\omega}_1}{2} + 1)J_{\ell+2j}(\tilde{\omega}_1) + \frac{\tilde{\omega}_1}{2}J^{'}_{\ell+2j}(\tilde{\omega}_1)] e^{-i(\ell+2j)\theta_n}\bigg\}
\end{eqnarray*}
and
\begin{eqnarray*}
%\label{imp2}
	&&{1\over i^{\ell}}[\mathcal{F} U_{\tilde{\omega}}^{-}(\theta)]_{\ell}
	= \mathcal{O}(\tilde{\omega}^{-2}) + \sum_{n=1}^{N(\mathbf{r}_0)}\bigg\{b_{n,0}(J_{\ell}(\tilde{\omega}_1) + iJ^{'}_{\ell}(\tilde{\omega}_1))e^{-i\ell\theta_n} \cr
	&&+ \sum_{j = -1, 1} b_{n, j}[(1-\frac{1}{i\tilde{\omega}_1})J_{\ell+j}(\tilde{\omega}_1) + iJ^{'}_{\ell+j}(\tilde{\omega}_1)] e^{-i(\ell + j)\theta_n} \cr
	&&{\color{blue}{+ 2r_1c(\mathbf{r}_0)\tilde{B}_n(\tilde{\omega},\mathbf{r}_0)\mathbf{a}_{-1}^T\nabla^2\phi_n(\mathbf{r}_0)\mathbf{a}_{1}  [ (\frac{i\tilde{\omega}_1}{2} - 1)J_{\ell}(\tilde{\omega}_1) - \frac{\tilde{\omega}_1}{2} J^{'}_{\ell}(\tilde{\omega}_1)] e^{-i\ell\theta_n}}} \cr
	&&+ \sum_{j=-1, 1} b_{n,2j}[ (\frac{i\tilde{\omega}_1}{2} - 1)J_{\ell+2j}(\tilde{\omega}_1) - \frac{\tilde{\omega}_1}{2}J^{'}_{\ell+2j}(\tilde{\omega}_1)] e^{-i(\ell+2j)\theta_n}\bigg\}.
\end{eqnarray*}
Then, by the definition of $\widetilde{U}_{\ell}$, we obtain
\begin{eqnarray}
	\widetilde{U}_{\ell}
	& = &\sum_{n=1}^{N(\mathbf{r}_0)} \bigg(b_{n,0}\mu_{0,\ell} e^{-i\ell\theta_n}+\sum_{j=-1,1}b_{n,-j}
    \mu_{j,\ell}e^{-i(\ell+j)\theta_n} \cr
	&+&\sum_{j=-1,1}b_{n,-2j}\mu_{2j,\ell}e^{-i(l+2j)\theta_n}\bigg)+\mathcal{O}(\tilde{\omega}^{-2}),\label{3.new-equality}
\end{eqnarray}
where
\begin{eqnarray*}
	\mu_{\ell,0} = \frac{J_{\ell}(\tilde{\omega}_1) - iJ^{'}_{\ell}(\tilde{\omega}_1)}{\frac{i\tilde{\omega}_1}{2}(J_{\ell}(\tilde{\omega}_1) - iJ_{\ell}^{'}(\tilde{\omega}_1)) + J_{\ell}(\tilde{\omega}_1)}
 - \frac{J_{\ell}(\tilde{\omega}_1) + iJ^{'}_{\ell}(\tilde{\omega}_1)}{\frac{i\tilde{\omega}_1}{2}(J_{\ell}(\tilde{\omega}_1) + iJ_{\ell}^{'}(\tilde{\omega}_1)) - J_{\ell}(\tilde{\omega}_1)} = \frac{2J_{\ell}^2(\tilde{\omega}_1)}{a_{\ell}^{+} a_{\ell}^{-}},
\end{eqnarray*}
\begin{eqnarray*}
	\mu_{\ell,j} &=& \frac{(1+\frac{1}{i\tilde{\omega}_1})J_{\ell+j}(\tilde{\omega}_1) - iJ^{'}_{\ell+j}(\tilde{\omega}_1)}{\frac{i\tilde{\omega}_1}{2}(J_{\ell}(\tilde{\omega}_1) - iJ_{\ell}^{'}(\tilde{\omega}_1)) + J_{\ell}(\tilde{\omega}_1)} - \frac{(1-\frac{1}{i\tilde{\omega}_1})J_{\ell+j}(\tilde{\omega}_1) + iJ^{'}_{\ell+j}(\tilde{\omega}_1)}{\frac{i\tilde{\omega}_1}{2}(J_{\ell}(\tilde{\omega}_1) + iJ_{\ell}^{'}(\tilde{\omega}_1)) - J_{\ell}(\tilde{\omega}_1)} \cr
	& = &\frac{-J_{\ell+j}(\tilde{\omega}_1)J_{\ell}(\tilde{\omega}_1) - \tilde{\omega}_1 J_{\ell+j}(\tilde{\omega}_1)J^{'}_{\ell}(\tilde{\omega}_1) + \tilde{\omega}_1 J^{'}_{\ell+j}(\tilde{\omega}_1)J_{\ell}(\tilde{\omega}_1)}{a_{\ell}^{+} a_{\ell}^{-}} \cr
	&= &j \frac{\tilde{\omega}_1(J_{\ell+j}^2(\tilde{\omega}_1)+J_{\ell}^2(\tilde{\omega}_1)) - 2(\ell+j)J_{\ell+j}(\tilde{\omega}_1)J_{\ell}(\tilde{\omega}_1)}{a_{\ell}^{+} a_{\ell}^{-}},
\end{eqnarray*}
\begin{eqnarray*}
\mu_{\ell,2j}& =& \frac{(\frac{i\tilde{\omega}_1}{2} + 1)J_{\ell+2j}(\tilde{\omega}_1) + \tilde{\omega}_1J^{'}_{\ell+2j}(\tilde{\omega}_1)}{\frac{i\tilde{\omega}_1}{2}(J_{\ell}(\tilde{\omega}_1) - iJ_{\ell}^{'}(\tilde{\omega}_1)) + J_{\ell}(\tilde{\omega}_1)}
- \frac{(\frac{i\tilde{\omega}_1}{2} - 1)J_{\ell+2j}(\tilde{\omega}_1) - \tilde{\omega}_1J^{'}_{\ell+2j}(\tilde{\omega}_1)}{\frac{i\tilde{\omega}_1}{2}(J_{\ell}(\tilde{\omega}_1) + iJ_{\ell}^{'}(\tilde{\omega}_1)) - J_{\ell}(\tilde{\omega}_1)} \cr
	&=& \frac{\frac{i\tilde{\omega}_1^2}{2}(J^{'}_{\ell+2j}(\tilde{\omega}_1)J_{\ell}(\tilde{\omega}_1) - J_{\ell+2j}(\tilde{\omega}_1)J^{'}_{\ell}(\tilde{\omega}_1))}{a_{\ell}^{+} a_{\ell}^{-}} \cr
	&=& ij(l+j)\tilde{\omega}_1 \frac{J^2_{\ell+j}(\tilde{\omega}_1) - J_{\ell+2j}(\tilde{\omega}_1)J_{\ell}(\tilde{\omega}_1)}{a_{\ell}^{+} a_{\ell}^{-}}.
\end{eqnarray*}
The equality (\ref{3.new-equality}) is just the desired expansion (\ref{multisample}).
\end{proof}
\begin{remark} When the Fourier transformer is applied to only one of the two impedance quantities $U_{\tilde{\omega}}^{+}(\theta)$ and $U_{\tilde{\omega}}^{-}(\theta)$, we obtain complicated nonlinear models
containing the blue terms as in the NMLA method.
%where the unknowns $\theta_n$ and $b_{n,j}$ ($n=1, \cdots, N(\mathbf{r}_0)$; $|j|\leq 2$) are coupled together.
 In order to remove the blue terms,
% decouple the unknowns,
we define new sampling quantities $\tilde{U}_l$, which can simplify the fitted model.
\end{remark}
%Considering the second order approximation in (\ref{secapp}) and letting the norms of the denominators in (\ref{postl}) tend to zero as $\ell \rightarrow \infty$,
We need to determine the $6N(\mathbf{r}_0)$ unknowns $\theta_n$ and $b_{n,j}$ in (\ref{multisample}) $(|j|\leq 2; n=1, \cdots, N(\mathbf{r}_0))$.
Thus at least $6N(\mathbf{r}_0)$ equations are needed therein, which implies that $M_{\tilde{\omega}_1}\geq 3N(\mathbf{r}_0)$. Therefore the best choice
would be $\tilde{\omega}_1=M_{\tilde{\omega}_1}= 3N(\mathbf{r}_0)$, which means that the sampling radius $r_1$ can be chosen as $r_1=3N(\mathbf{r}_0)c(\mathbf{r}_0)/\tilde{\omega}$.
%Therefore the best choice would be $|\ell| \leq 3N(\mathbf{r}_0)$
%, which means that the sampling radius $r_1$ satisfies $\omega r_1 \geq 3N(\mathbf{r}_0)c(\mathbf{r}_0)$ (thus $M_{\tilde{\omega}_1}\geq 3N(\mathbf{r}_0)$).

Let $\vartheta$ denote the column vector composed of $\theta_n$ and $b_{n,j}$ $(|j|\leq 2; n=1, \cdots, N(\mathbf{r}_0))$. Define the function
$$
%G_{\ell}(\vartheta)=\sum_{n=1}^{N(\mathbf{r}_0)} \bigg(\mu_{0, \ell}b_{n,0}e^{-i\ell\theta_n}
%+\sum_{j=-1,1}\mu_{j,\ell}b_{n,j}e^{-i(\ell+j)\theta_n}
%+\sum_{j=-1,1}\mu_{2j,\ell}b_{n,2j}e^{-i(l+2j)\theta_n}\bigg).
G_{\ell}(\vartheta)=\sum_{n=1}^{N(\mathbf{r}_0)}\sum_{k=-2}^2\mu_{\ell,k}b_{n,k}e^{-i(\ell+k)\theta_n}.
$$
Then, from (\ref{multisample}) we have
\begin{equation}
\widetilde{U}_{\ell}=G_{\ell}(\vartheta)+\mathcal{O}(\tilde{\omega}^{-2}). \label{4.new1}
\end{equation}
Although the number $\widetilde{U}_{\ell}$ can be computed, the vector $\vartheta$ is unknown.

By introducing $6N(\mathbf{r}_0)$ parameters $\{\bar{\theta}_n\}$ and
$\{\chi_{n,j}\}$, which corresponds to the parameters $\theta_n$ and $b_{n,j}$, let $\lambda$ denote the vector composed of
$\{\bar{\theta}_n\}$ and $\{\chi_{n,j}\}$ $(|j|\leq 2; n=1, \cdots, N(\mathbf{r}_0))$, and define
$$
%G_{\ell}(\lambda)=\sum_{n=1}^{N(\mathbf{r}_0)} \bigg(\mu_{0, \ell}\chi_{n,0}e^{-i\ell\bar{\theta}_n}
%+\sum_{j=-1,1}\mu_{j,\ell}\chi_{n,j}e^{-i(\ell+j)\bar{\theta}_n}
%+\sum_{j=-1,1}\mu_{2j,\ell}\chi_{n,2j}e^{-i(l+2j)\bar{\theta}_n}\bigg).
G_{\ell}(\lambda)=\sum_{n=1}^{N(\mathbf{r}_0)}\sum_{k=-2}^2\mu_{\ell,k}\chi_{n,k}e^{-i(\ell+k)\bar{\theta}_n}.
$$
Define
%the residuals
%\begin{equation}
%	\varepsilon_{\ell}(\lambda):= \widetilde{U}_{\ell}-G_{\ell}(\lambda) \label{4.new2}
%\end{equation} and
the functional ($M_{\tilde{\omega}_1}=3N({\bf r}_0)$)
\begin{equation}
J(\lambda)=\sum\limits_{\ell=-M_{\tilde{\omega}_1}}^{M_{\tilde{\omega}_1}}|\widetilde{U}_{\ell}-G_{\ell}(\lambda)|^2. \label{4.new2}
\end{equation}
We need to minimize the functional $J(\lambda)$ to determine the unknown $\lambda$:
%Let ${\mathcal E}(\lambda)$ denote the column vector composed of all $\varepsilon_{\ell}(\lambda)$ ($\ell=-M_{\tilde{\omega}_1}, \cdots, M_{\tilde{\omega}_1}$).
%It is clear that ${\bf \varepsilon}$ depends on the parameters $\{\bar{\theta}_n\}$ and $\{\chi_{n,j}\}$, so we write it as ${\mathcal E}(\bar{\theta}_n, \chi_{n,j})$.
%We thus try to minimize the norm of ${\mathcal E}(\lambda)$ by varying the parameters,
\begin{equation}\label{nonlinearmin}
\min_{\lambda} J(\lambda).
%	\min_{\lambda} \|{\mathcal E}(\lambda)\|_2.
\end{equation}

We solve the minimization problem (\ref{nonlinearmin}) by the damped least-squares (DLS) algorithm \cite{Meyer1970}. Let $\theta_n^{\ast}$ and $\tilde{B}^{*}_n(\tilde{\omega},\mathbf{r}_0)$
be the low-accuracy ray angle and the ray amplitudes computed by NMLA. Then we use the preliminary values as starting values
\begin{equation}\label{dlsinit}
\bar{\theta}_n^0 = \theta_n^{\ast},\quad \chi_{n,0}^0 = \tilde{B}^{*}_n(\tilde{\omega},\mathbf{r}_0), \quad \chi_{n,j}^0 = 0,~~ j = -2,-1,1,2.
\end{equation}
$$ (n = 1,\cdots,N(\mathbf{r}_0))$$
Usually a few iterations gives a dramatic improvement to the accuracy of $\{\theta_n^{\ast}\}$. Let $\{\theta_n^{post}\}$ denote the ray angles generated by this post-processing method.
%Due to the second order approximation that we used in (\ref{secapp}) and the analytic results of the Fourier transformation in (\ref{postl}),  we have the following approximation property
We describe the final result as follows.
\begin{theorem}
	Given the analytical solution $u(\tilde{\omega}, \mathbf{r})$ $(\tilde{\omega} = \sqrt{\omega})$ of the low frequency problem (\ref{homohelm1}), the post-processing process (\ref{nonlinearmin}) can improve the approximation error of the ray directions to be $\mathcal{O}(\omega^{-1})$, namely,
\begin{equation}\label{theomultiray}
	|\theta_n^{post} - \theta_n|\leq C\tilde{\omega}^{-2}=C\omega^{-1} \quad (n = 1,\cdots, N(\mathbf{r}_0))
\end{equation}
for a sufficiently large $\omega$, where the constant $C$ depends only on $\{\theta_n\}$ (and $N(\mathbf{r}_0)$).
\end{theorem}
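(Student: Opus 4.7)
The plan is to exploit the approximate identity $\widetilde{U}_{\ell}=G_{\ell}(\vartheta)+\mathcal{O}(\tilde{\omega}^{-2})$ from the preceding lemma, which says that the true vector $\vartheta$ (the one containing the exact ray angles $\theta_n$ and true amplitudes $b_{n,j}$) is an $\mathcal{O}(\tilde{\omega}^{-2})$-residual root of the nonlinear system $\widetilde{U}_{\ell}=G_{\ell}(\lambda)$. First I would evaluate the objective at $\vartheta$: since $\ell$ ranges over $|\ell|\le M_{\tilde{\omega}_1}=3N(\mathbf{r}_0)$, the sum in \eqref{4.new2} has only $6N(\mathbf{r}_0)+1$ terms, each of size $\mathcal{O}(\tilde{\omega}^{-4})$, hence $J(\vartheta)=\mathcal{O}(\tilde{\omega}^{-4})$. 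Because $\lambda^{post}$ is a minimizer (assuming the DLS iteration converges to the global minimum near the NMLA initial guess), we get $J(\lambda^{post})\le J(\vartheta)=\mathcal{O}(\tilde{\omega}^{-4})$, and consequently $|G_{\ell}(\lambda^{post})-G_{\ell}(\vartheta)|\le|G_{\ell}(\lambda^{post})-\widetilde{U}_{\ell}|+|\widetilde{U}_{\ell}-G_{\ell}(\vartheta)|=\mathcal{O}(\tilde{\omega}^{-2})$ for every $|\ell|\le M_{\tilde{\omega}_1}$.

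Next I would Taylor-expand $G_{\ell}$ about $\vartheta$, writing
\begin{equation*}
G_{\ell}(\lambda^{post})-G_{\ell}(\vartheta)=\nabla_{\lambda}G_{\ell}(\vartheta)\cdot(\lambda^{post}-\vartheta)+\mathcal{O}(|\lambda^{post}-\vartheta|^2),
\end{equation*}
and stack these over $\ell$ into a rectangular linear system $D(\vartheta)(\lambda^{post}-\vartheta)=\mathbf{R}$ with $\|\mathbf{R}\|=\mathcal{O}(\tilde{\omega}^{-2})+\mathcal{O}(|\lambda^{post}-\vartheta|^2)$. The critical ingredient is then to show that the Jacobian $D(\vartheta)\in\mathbb{C}^{(6N+1)\times 6N}$, whose rows involve the Bessel-function weights $\mu_{\ell,k}$ and the phase factors $e^{-i(\ell+k)\theta_n}$ with parameter $\tilde{\omega}_1=3N(\mathbf{r}_0)$ (hence independent of $\omega$), has full column rank with its smallest singular value bounded below by a constant depending only on $\{\theta_n\}$ and $N(\mathbf{r}_0)$. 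This follows structurally from the Vandermonde-type nondegeneracy of the collection $\{e^{-i(\ell+k)\theta_n}\}_{\ell,k,n}$ once the ray angles $\{\theta_n\}$ are pairwise distinct, combined with the fact that $\mu_{\ell,0}\neq 0$ and $\mu_{\ell,\pm 1},\mu_{\ell,\pm 2}$ remain nonzero at the fixed scale $\tilde{\omega}_1=3N(\mathbf{r}_0)$.

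Given such a lower bound $\sigma_{\min}(D(\vartheta))\ge c>0$, inverting yields
\begin{equation*}
|\lambda^{post}-\vartheta|\le C\bigl(\tilde{\omega}^{-2}+|\lambda^{post}-\vartheta|^2\bigr),
\end{equation*}
which, together with a bootstrap argument using the NMLA starting error $|\bar\theta_n^0-\theta_n|=\mathcal{O}(\tilde{\omega}^{-1/2})$ from \eqref{ray_angle} (and $\chi_{n,0}^0$ correct to the same order), closes the estimate to $|\lambda^{post}-\vartheta|=\mathcal{O}(\tilde{\omega}^{-2})$; in particular $|\theta_n^{post}-\theta_n|\le C\tilde{\omega}^{-2}=C\omega^{-1}$. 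The hard part is the lower bound on $\sigma_{\min}(D(\vartheta))$: the block separating the $\theta_n$-derivatives from the $b_{n,j}$-derivatives must be analysed carefully, since the $\theta_n$-columns are of the form $-i(\ell+k)\mu_{\ell,k}b_{n,k}e^{-i(\ell+k)\theta_n}$ and degenerate if any $b_{n,k}$ vanishes; one therefore needs the mild genericity assumption that the leading amplitudes $b_{n,0}=\tilde B_n(\tilde\omega,\mathbf{r}_0)$ are nonzero, which is the standard nonresonance hypothesis underlying the NMLA framework. A secondary subtlety is that the DLS iteration converges to the global minimizer of $J$; this is ensured because the NMLA initial data lies in the $\mathcal{O}(\tilde{\omega}^{-1/2})$-neighborhood of $\vartheta$, well inside the domain of local quadratic convexity of $J$ implied by the Jacobian bound.
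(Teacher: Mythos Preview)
Your approach is essentially correct and shares the key starting observations with the paper: the bound $J(\vartheta)=\mathcal{O}(\tilde{\omega}^{-4})$ from the expansion lemma, and the use of the NMLA estimate $|\lambda^0-\vartheta|=\mathcal{O}(\tilde{\omega}^{-1/2})$ to localize the minimizer. However, the way you pass from small residual to small parameter error is genuinely different from the paper. You linearize the map $G$ itself (a Gauss--Newton argument), stacking $G_{\ell}(\lambda^{post})-G_{\ell}(\vartheta)=D(\vartheta)(\lambda^{post}-\vartheta)+\mathcal{O}(|\lambda^{post}-\vartheta|^2)$ and then inverting via a lower bound on the smallest singular value of the full $(6N+1)\times 6N$ Jacobian $D(\vartheta)$. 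The paper instead expands the scalar objective $J$ to second order, but only in the angle variables: it freezes the amplitude components at their true values $\vartheta^-$, forms the mixed point $\lambda^{post}_-=(\theta^{post},\vartheta^-)$, and uses $\nabla_\theta J(\lambda^{post}_-)=0$ to reduce to the quadratic form $(\theta-\theta^{post})^t H_\theta(\xi)(\theta-\theta^{post})\le C\tilde{\omega}^{-4}$. What this buys the paper is that the coercivity step becomes an $N\times N$ problem: after showing the cross-terms are $\mathcal{O}(\tilde{\omega}^{-1})$, the leading part of $H_\theta$ is $\sum_\ell \ell^2\mu_{\ell,0}^2\,\bar{\alpha}_\ell^t\alpha_\ell$ with $\alpha_\ell=(e^{-i\ell\theta_1},\dots,e^{-i\ell\theta_N})$, and positive definiteness follows directly from the classical Vandermonde independence of $\{\alpha_\ell\}_{\ell=1}^N$ for pairwise distinct $\theta_n$. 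Your route requires the stronger statement that all $6N$ columns of $D(\vartheta)$ (angle- and amplitude-derivatives together) are uniformly independent, which is true but needs a more involved block argument than the clean $N\times N$ Vandermonde; on the other hand, your conclusion is slightly stronger since it gives $|\lambda^{post}-\vartheta|=\mathcal{O}(\tilde{\omega}^{-2})$ for the full parameter vector, not only the angles.
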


\begin{proof}  Let $\theta$ and $\theta^{post}$ denote the $N$-dimensional column vector composed of $\theta_1,\cdots,\theta_N$ and $\theta^{post}_1,\cdots,\theta^{post}_N$, respectively. Namely,
$$ \theta=\left(\begin{array}{c}\theta_1\cr\vdots\cr\theta_N\end{array}\right)\quad \mbox{and}\quad\theta^{post}=\left(\begin{array}{c}\theta^{post}_1\cr\vdots\cr\theta_N^{post}\end{array}\right). $$
%Let $\lambda^{post}$, which is a $(6N)$-dimensional column vector, denote a solution of the minimization problem (\ref{nonlinearmin}). For convenience, we write
%$$ \lambda^{post}=\left(\begin{array}{c}\theta^{post}\cr\chi^{post}\end{array}\right)\quad\mbox{with}\quad \theta^{post}=\left(\begin{array}{c}\theta^{post}_1\cr\vdots\cr\theta_n^{post}\end{array}\right). $$
Let $\vartheta^-$ denote the $(5N)$-dimensional column vector composed of $b_{n,j}$ $(|j|\leq 2; n=1, \cdots, N(\mathbf{r}_0))$. Define the $(6N)$-dimensional column vector
$$ \lambda^{post}_-=\left(\begin{array}{c}\theta^{post}\cr\vartheta^-\end{array}\right). $$
Norice that $J$ is a real-valued function, and $\theta$ and $\theta^{post}$ are real vectors.
Using the Taylor formula yields
$$ J(\vartheta)-J(\lambda^{post}_-)=\nabla_{\theta}J(\lambda^{post}_-)\cdot(\theta-\theta^{post})+{1\over 2}H_{\theta}(\xi)(\theta-\theta^{post})\cdot(\theta-\theta^{post}), $$
where $\xi$ is a mediate value between $\vartheta$ and $\lambda^{post}_-$, and $H_{\theta}(\xi)$ denotes the Hesse matrix of $J(\lambda)$ with the variable $\theta$ at the point $\xi$. Since $\nabla_{\theta} J(\lambda^{post}_-)=0$, the above equality
becomes
$$
J(\vartheta)-J(\lambda^{post}_-)={1\over 2}(\theta-\theta^{post})^tH_{\theta}(\xi)(\theta-\theta^{post}). $$
It follows by (\ref{4.new1})-(\ref{4.new2}) that $J(\vartheta)=\mathcal{O}(\tilde{\omega}^{-4})$. Then, noticing $J(\lambda^{post}_-)\geq 0$, the above equality leads to
\begin{equation}
(\theta-\theta^{post})^tH_{\theta}(\xi)(\theta-\theta^{post})\leq C\tilde{\omega}^{-4}. \label{Taylor1}
\end{equation}
It suffices to prove that
\begin{equation}
(\theta-\theta^{post})^tH_{\theta}(\xi)(\theta-\theta^{post})\geq c\|\theta-\theta^{post}\|^2. \label{3.inverse1}
\end{equation}

Let $\lambda^{post}$, which is a $(6N)$-dimensional column vector, denote a solution of the minimization problem (\ref{nonlinearmin}).
We first prove that $\lambda$ satisfies
\begin{equation}\label{iterassume}
\|\vartheta-\lambda^{post}\|_2 \leq \mathcal{O}(\tilde{\omega}^{-1/2}).
\end{equation}
Let $\lambda^{k}$ be the approximation generated by the $k$-th iteration of the DLS method for (\ref{nonlinearmin}). By (\ref{ray_angle}) and (\ref{dlsinit}), the initial value $\lambda^0$ satisfies
$$\|\vartheta-\lambda^0\|_2 \leq \mathcal{O}(\tilde{\omega}^{-1/2}).$$
The DLS method ensures that (see \cite{Meyer1970})
$$\|\vartheta - \lambda^k\|_2 \leq C\rho^k \|\vartheta-\lambda^0\|_2 \leq C\rho^k \tilde{\omega}^{-1/2}$$
with $0\leq \rho \leq 1$. Hence we get
$$ \|\vartheta-\lim_{k\rightarrow\infty}\lambda^k\|_2\leq C\tilde{\omega}^{-1/2},$$
which implies (\ref{iterassume}).

It follows by (\ref {iterassume}) that
$$ \|\vartheta-\xi\|_2 = \mathcal{O}(\tilde{\omega}^{-1}). $$
Using this, together with $J(\vartheta)=\mathcal{O}(\tilde{\omega}^{-4})$ and $r_1=\mathcal{O}(\tilde{\omega}^{-1})$, we can verify that (for a sufficiently large $\tilde{\omega}$)
$$ {\partial^2 J\over \partial \theta_n\partial \theta_k}\mid_{\xi}=\sum\limits_{\ell=-3N}^{3N}\ell^2\mu_{\ell,0}^2(b_{n,0}\overline{b_{k,0}}e^{-i\ell(\theta_n-\theta_k)}+\overline{b_{n,0}}b_{k,0}e^{i\ell(\theta_n-\theta_k)})+\mathcal{O}(\tilde{\omega}^{-1}).$$
Define $a_{n}^{(\ell)}=b_{n,0}e^{-i\ell\theta_n}$. The above formula can be written as
$$ {\partial^2 J\over \partial \theta_n\partial \theta_k}|_{\xi}=\sum\limits_{\ell=-3N}^{3N}\ell^2\mu_{\ell,0}^2(a_{n}^{(\ell)}\overline{a_{k}^{(\ell)}}+\overline{a_{n}^{(\ell)}}a_{k}^{(\ell)})+\mathcal{O}(\tilde{\omega}^{-1}).$$
For convenience, we set $\varepsilon_n=\theta_n-\theta^{post}_n$ and $\varepsilon=\theta-\theta^{post}$. Then
$$
\varepsilon^tH_{\theta}(\xi)\varepsilon=\sum\limits_{\ell=-3N}^{3N}\ell^2\mu_{\ell,0}^2\big(|\sum\limits_{n=1}^Na_{n}^{(\ell)}\varepsilon_n|^2
+|\sum\limits_{n=1}^N\overline{a_{n}^{(\ell)}}\varepsilon_n|^2\big)+\mathcal{O}(\tilde{\omega}^{-1})\|\varepsilon\|^2.
$$
Notice that $\mu^2_{\ell,0}$ has a positive lower bound independent of $\tilde{\omega}$, the above equality gives
\begin{equation}
\varepsilon^tH_{\theta}(\xi)\varepsilon\geq c\sum\limits_{\ell=1}^{N}|\sum\limits_{n=1}^Na_{n}^{(\ell)}\varepsilon_n|^2
+\mathcal{O}(\tilde{\omega}^{-1})\|\varepsilon\|^2.\label{3.inverse2}
\end{equation}

Define the vectors $\alpha_{\ell}=(e^{-i\ell\theta_1}\cdots e^{-i\ell\theta_N})$ and the matrix $A=\sum\limits_{\ell=1}^N\overline{\alpha}_{\ell}^t\alpha_{\ell}$.
%Set $a^{(\ell)}_{nk}=e^{-i\ell(\theta_n-\theta_k)}$. Let $A^{(\ell)}$ denote the matrix with the entries $\{a^{(\ell)}_{nk}\}$, and define $A=\sum\limits_{\ell=1}^NA^{(\ell)}$.
In addition, let $\Lambda$ denote the diagonal matrix with the diagonal entries $\{b_{n,0}\}$. It is easy to check that
$$ \sum\limits_{\ell=1}^{N}|\sum\limits_{n=1}^Na_{n}^{(\ell)}\varepsilon_n|^2=\sum\limits_{\ell=1}^{N}|\alpha_{\ell}(\Lambda\varepsilon)|^2=(\overline{\Lambda}\varepsilon)^tA(\Lambda\varepsilon). $$
Notice that the direction angles $\{\theta_n\}$ are different each other. Then the vectors $\alpha_1,\cdots,\alpha_N$ are linearly independent, and so the matrix $A$ is Hermitian positive definite.
Moreover, the minimal eigenvalue of $A$ is independent of $\tilde{\omega}$. Thus
$$ \sum\limits_{\ell=1}^{N}|\sum\limits_{n=1}^Na_{\ell}^{(n)}\varepsilon_n|^2\geq c\|\Lambda\varepsilon\|^2\geq c\|\varepsilon\|^2.$$
Here we have used the fact that $\{|b_{n,0}|\}$ has a lower bound independent of $\tilde{\omega}$. Then the constant $c$ depends on $\{\theta_n\}$ only. Substituting the above inequality into (\ref{3.inverse2}) yields (\ref{3.inverse1}), which, combing (\ref{Taylor1}), gives the desired result.

\end{proof}

If we replace $u(\tilde{\omega}, \mathbf{r})$ by an approximate solution, we can design the corresponding numerical method.
\subsection{{\bf Numerical method}}\label{multinum}

For a fixed large wave number $\omega$, set $\widetilde{\omega} = \sqrt{\omega}$. Using the given function $\xi$ and $g$ in problem (\ref{nonhomogeneousHelm}), we consider the following Helmholtz equation with much lower frequency
\begin{equation}\label{lowfhomoIBC}
\left\{
	\begin{aligned}
	&\Delta u+ \widetilde{\omega}^2\xi(\mathbf{r}) u = 0, \quad \text{in\ } \Omega, \\
	&(\partial_{\mathbf{n}} + i\widetilde{\omega}\sqrt{\xi(\mathbf{r})}) u = g(\widetilde{\omega}, \mathbf{r}), \quad \text{on\ } \partial\Omega,
	\end{aligned}
	\right.
\end{equation}
and use $u(\widetilde{\omega}, \mathbf{r})$ to denote its analytical solution. Based on a good approximate solution of $u(\widetilde{\omega}, \mathbf{r})$, we can compute high accuracy ray angles
by the following five steps:

%\begin{enumerate}[(2-i).] \label{multiproc}
%\item
{\bf Step 1}. Let $\mathcal{T}_{h_0}$ be a quasi-uniformly triangular partition of the domain $\Omega$ with the mesh size $h_0$ satisfying $\widetilde{\omega}h_0^2 \approx 1$.
%By {\bf Assumption \ref{mediumregion}}, we assume that the number of rays are constant on each elements.
We apply the GOPWDG method to the discretization of (\ref{lowfhomoIBC}) on $\mathcal{T}_{h_0}$, and use $u_{h_0}(\widetilde{\omega}, \mathbf{r})$ to denote the resulting approximate solution.

{\bf Step 2}. On each element of $\mathcal{T}_{h_0}$ with the barycenter $\mathbf{r}_0$, we apply the NMLA method to $u_{h_0}(\widetilde{\omega}, \mathbf{r})$. We compute the number $N_{h_0}(\mathbf{r}_0)$
of rays, ray angles $\{\theta_{{h_0},n}\}_{n=1}^{N_{h_0}(\mathbf{r}_0)}$ and the low-accuracy amplitudes $\{B_{{h_0},n}\}_{n=1}^{N_{h_0}(\mathbf{r}_0)}$ as in the first part of Subsection 3.1.

 {\bf Step 3}. Let $\mathcal{T}_{\widetilde{h}}$ be a uniformly refining triangular mesh of $\mathcal{T}_{h_0}$ with the mesh size $\tilde{h}$ satisfying $\widetilde{\omega}\widetilde{h} \approx 3N_{h_0}(\mathbf{r}_0)c(\mathbf{r}_0)$. We apply the GOPWDG method to the discretization of (\ref{lowfhomoIBC}) on $\mathcal{T}_{\widetilde{h}}$, and use $u_{\widetilde{h}}(\widetilde{\omega}, \mathbf{r})$ to denote the resulting approximate solution.

 {\bf Step 4}. Consider every element of $\mathcal{T}_{\widetilde{h}}$. Let $N_{\widetilde{h}}(\mathbf{r}_0)$, $\{\theta_{{\widetilde{h}},n}^{\ast}\}_{n=1}^{N_{\widetilde{h}}(\mathbf{r}_0)}$ and $\{B_{{\widetilde{h}},n}^{\ast}\}_{n=1}^{N_{\widetilde{h}}(\mathbf{r}_0)}$  be the natural interpolation of $N_{h_0}(\mathbf{r}_0)$, $\{\theta_{{h_0},n}\}_{n=1}^{N_{h_0}(\mathbf{r}_0)}$ and $\{B_{{h_0},n}\}_{n=1}^{N_{h_0}(\mathbf{r}_0)}$, respectively. We replace the function $u(\widetilde{\omega}, \mathbf{r})$ in (\ref{secapp}) by $u_{\widetilde{h}}(\widetilde{\omega}, \mathbf{r})$
 to get two new impedance quantities, and further sample the quantities to compute the numbers $\{\widetilde{U}_{\widetilde{h}, \ell}\}_{\ell\leq3N_{\widetilde{h}}(\mathbf{r}_0)}$. We use these numbers to define the minimization
  problem (\ref{nonlinearmin}), and solve it by the DLS method
 % minimization algorithm
  to obtain high-accuracy ray angles $\{\theta_{{\widetilde{h}},n}^{post}\}_{n=1}^{N_{\widetilde{h}}(\mathbf{r}_0)}$, where the initial guesses are chosen as $\theta_{{\widetilde{h}},n}^{\ast}$ and $B_{{\widetilde{h}},n}^{\ast}$.

{\bf Step 5}. Let $\mathcal{T}_{h}$ be a uniformly refining triangulation of $\mathcal{T}_{\widetilde{h}}$ with the mesh size $h$ satisfying $\omega h\approx 1$. Let $N_h(\mathbf{r}_0)$ and $\{\theta_{h,n}\}_{n=1}^{N_h(\mathbf{r}_0)}$ denote the natural interpolations of $N_{\widetilde{h}}(\mathbf{r}_0)$ and $\{\theta_{{\widetilde{h}},n}^{post}\}_{n=1}^{N_{\widetilde{h}}(\mathbf{r}_0)}$ on $\mathcal{T}_{h}$,
respectively.

Suppose we have obtained exact ray numbers ($N_{\widetilde{h}}(\mathbf{r}_0) = N(\mathbf{r}_0)$) on each element. We give the approximation property of the resulting ray angles in the following theorem.
\begin{theorem}\label{multiraylemma}
The resulting ray angles of the above process have the following approximation error
\begin{equation}\label{multirayerr}
	|\theta_{h,n}^{post} - \theta_n| = \mathcal{O}(\omega^{-1}), \quad n = 1,\cdots, N(\mathbf{r}_0).
\end{equation}
\end{theorem}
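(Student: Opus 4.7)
The plan is to reduce Theorem \ref{multiraylemma} to the previous theorem by tracking how the error of the discrete solution $u_{\widetilde{h}}(\widetilde{\omega},\mathbf{r})$ (in place of the exact $u(\widetilde{\omega},\mathbf{r})$) propagates through the sampled quantities $\widetilde{U}_{\widetilde{h},\ell}$ and finally through the DLS iteration. Roughly, I need to show that replacing $u$ by $u_{\widetilde{h}}$ introduces a perturbation of $O(\widetilde{\omega}^{-2})$ into each $\widetilde{U}_\ell$, which is the same order as the Taylor truncation error already present in (\ref{4.new1}); then the coercivity argument of the previous theorem carries over verbatim.

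First, I would invoke the best $L^2$ approximation property of the plane wave space used in the GOPWDG discretization of the low-frequency problem (\ref{lowfhomoIBC}) on $\mathcal{T}_{\widetilde{h}}$. Since $\widetilde{\omega}\widetilde{h}\approx 3N(\mathbf{r}_0)c(\mathbf{r}_0)=O(1)$ and the local basis functions are built from the $O(\widetilde{\omega}^{-1/2})$-accurate NMLA ray angles, one obtains an error bound of the form $\|u-u_{\widetilde{h}}\|_{L^2}\lesssim \widetilde{\omega}^{-1}$ on an $O(\widetilde{\omega}^{-1})$-neighborhood of $\mathbf{r}_0$, together with a matching estimate for $\partial_r(u-u_{\widetilde{h}})$ on the sampling circle $S_{r_1}(\mathbf{r}_0)$ (using interior regularity for Helmholtz with a scaling argument, noting that $r_1=O(\widetilde{\omega}^{-1})$).

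Second, I would propagate this bound to the impedance quantities. Because the factor $c(\mathbf{r}_0)/(i\widetilde{\omega})$ in front of $\partial_r$ in (\ref{secapp}) supplies an extra $\widetilde{\omega}^{-1}$ and the sampling circle has arclength $O(\widetilde{\omega}^{-1})$, the Fourier coefficients $[\mathcal{F}U^{\pm}_{\widetilde{\omega}}]_\ell$ are perturbed by at most $O(\widetilde{\omega}^{-2})$ for each $|\ell|\leq 3N(\mathbf{r}_0)$. Since the denominators $a_\ell^{\pm}$ and the multipliers $\mu_{\ell,k}$ are $O(1)$ uniformly in $\widetilde{\omega}$ for bounded $\ell$, the resulting bound
\[
|\widetilde{U}_{\widetilde{h},\ell}-\widetilde{U}_{\ell}|=O(\widetilde{\omega}^{-2}),\qquad |\ell|\leq 3N(\mathbf{r}_0),
\]
follows. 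This is the step I expect to be the main obstacle: the error must be exactly of the same order as the Taylor remainder in (\ref{4.new1}), not merely $o(1)$; it is here that the choice $\widetilde{\omega}=\sqrt{\omega}$ together with the scaling $\widetilde{\omega}\widetilde{h}=O(1)$ is essential.

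Finally, I would repeat the Hessian argument of the previous theorem with the perturbed functional $J_{\widetilde{h}}(\lambda)=\sum_{|\ell|\leq 3N}|\widetilde{U}_{\widetilde{h},\ell}-G_\ell(\lambda)|^2$. The exact parameter vector $\vartheta$ now satisfies $J_{\widetilde{h}}(\vartheta)=O(\widetilde{\omega}^{-4})$ by combining the $O(\widetilde{\omega}^{-2})$ Taylor residue from Lemma 3.1 with the $O(\widetilde{\omega}^{-2})$ sampling perturbation just established. The NMLA starting value still satisfies $\|\vartheta-\lambda^0\|_2=O(\widetilde{\omega}^{-1/2})$, so the DLS iteration converges into the basin around the minimizer $\lambda^{post}_{\widetilde{h}}$, and the lower bound $(\theta-\theta^{post}_{\widetilde{h}})^t H_\theta(\xi)(\theta-\theta^{post}_{\widetilde{h}})\geq c\|\theta-\theta^{post}_{\widetilde{h}}\|^2$ continues to hold because the constant $c$ depended only on the true angles $\{\theta_n\}$ via the Vandermonde-type matrix $A=\sum_{\ell=1}^N\overline{\alpha}_\ell^t\alpha_\ell$, not on the sampled data. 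Combining these yields $\|\theta-\theta^{post}_{\widetilde{h}}\|^2\leq C\widetilde{\omega}^{-4}$, i.e.\ $|\theta^{post}_{\widetilde{h},n}-\theta_n|=O(\widetilde{\omega}^{-2})=O(\omega^{-1})$. Since Step 5 uses a natural interpolation of these nodal values onto $\mathcal{T}_h$ without further perturbing their values at the barycenters, this gives (\ref{multirayerr}).
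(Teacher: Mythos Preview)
Your approach is essentially the same as the paper's: both reduce Theorem~\ref{multiraylemma} to the analytic-data theorem (Theorem~3.1) by arguing that the GOPWDG approximation $u_{\widetilde{h}}(\widetilde{\omega},\cdot)$ is accurate enough that replacing the exact solution by it does not spoil the $O(\widetilde{\omega}^{-2})$ bound, and then noting that Step~5's interpolation preserves the nodal values. The paper's own proof is in fact a two-sentence sketch---it simply asserts that ``the GOPWDG approximate solutions of the low-frequency problem possess sufficient high accuracy'' and invokes (\ref{theomultiray}) directly---whereas you have spelled out the perturbation analysis (error in $u_{\widetilde{h}}\to$ error in $U^{\pm}_{\widetilde{\omega}}\to$ error in $\widetilde{U}_\ell\to$ repeat the Hessian/Vandermonde argument) in much greater detail.

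One caution: your claim that the Fourier coefficients $[\mathcal{F}U^{\pm}_{\widetilde{\omega}}]_\ell$ pick up an extra $\widetilde{\omega}^{-1}$ factor ``because the sampling circle has arclength $O(\widetilde{\omega}^{-1})$'' is not quite right as stated, since the Fourier average is over the angle $\theta\in[0,2\pi)$, not over arclength. To make that step precise you would need a pointwise (or $L^2(S_{r_1})$) bound on $u-u_{\widetilde{h}}$ and $\partial_r(u-u_{\widetilde{h}})$ of order $\widetilde{\omega}^{-2}$ and $\widetilde{\omega}^{-1}$ respectively; this is what the paper's phrase ``sufficient high accuracy'' is covering, and it relies on the convergence theory of the GOPWDG scheme from \cite{HuW2021} rather than on any arclength scaling. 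With that correction your outline matches the paper's intended argument and is a legitimate expansion of it.
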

\begin{proof}
Notice that the GOPWDG  approximate solutions of the low-frequency problem possess sufficient high accuracy. Applying (\ref{theomultiray}) to every $\theta_{{\widetilde{h}},n}^{post}$ and using the definitions
%interpolation error $\mathcal{O}(h)=\mathcal{O}(\omega^{-1})$
given in the above {\bf Step 5}, we obtain the approximate error of the resulting ray angles.
\end{proof}

\subsection{Approximation property of the adaptive plane wave space}

Define $\mathbf{d}_{h,n} = (\cos\theta^{post}_{h, n}, \sin\theta^{post}_{h, n}), n = 1, \cdots, N(\mathbf{r}_0)$. On the element $K_0$ with the barycentric points denoted by $\mathbf{r}_0$, our adaptive GOPW basis functions are chosen as
\begin{equation}\label{highmultibasis}
	\varphi_{n, j}(\omega, \mathbf{r}) = p^{\tau_{h,n}}_{j}({\bf r})e^{i\omega\tau_{h,n}(\mathbf{r})}, \quad n = 1, \cdots, N(\mathbf{r}_0), j = 1, 2.
\end{equation}
with $\tau_{h,n}({\bf r})$ and $p^{\tau_{h,n}}_{j}({\bf r})$ ($j=1,2$) be the polynomials determined as in Subsection 2.2 by replacing the discrete plane wave direction
${\bf d}_n$
with $\mathbf{d}_{h,n}$. Our adaptive GOPW space adapted to solve the high-frequency problem is defined as
$$ V_{ray}(K_0)=span\big\{\varphi_{n, j}(\omega, \mathbf{r}):~n = 1, \cdots, N(\mathbf{r}_0);~j=1,2\big\}. $$
In order to investigate approximate properties of this space, we define
$$ {\mathcal H}^2(K_0)=\big\{v\in H^2(K_0):~~v~~\mbox{satisfies}~~(\Delta+\kappa I)=0~~\mbox{on}~~K_0\big\}. $$

\begin{theorem}\label{approx} Let $u\in {\mathcal H}^2(K_0)$ be the analytic solution of (\ref{homohelm}).
Assume that $\phi_n$ and $A_{n,s}$ in the optics ansatz (\ref{geoanaN})-(\ref{geoanan}) of $u$ satisfy $\phi_n\in C^2(K_0)$ and $A_{n,s}\in C^1(K_0)$.
%Suppose that the mesh size $h$ satisfies $h \omega\leq C$.
Then there exists an interpolation operator $\pi_h: {\mathcal H}^2(K_0)\rightarrow V_{ray}(K_0)$ such that
\begin{equation}\label{local-approx1}
\|u-\pi_h u\|_{L^2(K_0)}\leq C(\omega^{-1}+h+\omega h^2),
\end{equation}
%and
%\begin{equation}\label{local-approx2}
%\|u-u_h\|_{C^1(K_0)}\leq C.
%\end{equation}
where the constant $C$ depends only on the upper bound of $\|\phi_n\|_{H^2(K_0)}$ and $\|A_{n,s}\|_{H^1(K_0)}$. In particular, when choosing $h$ as $h\sim \omega^{-1}$, we have
$$ \|u-\pi_h u\|_{L^2(K_0)}\leq C(\omega^{-1})=C h. $$
\end{theorem}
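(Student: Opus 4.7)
The plan is to construct $\pi_h u$ explicitly from the geometric optics decomposition of $u$, matching each wave branch by a linear combination of the two basis functions attached to that branch, and then estimate the residual term-by-term. Using (\ref{geoanaN})--(\ref{geoanan}) and the uniform $C^1$ bounds on the $A_{n,s}$, I first truncate the amplitude series:
\begin{equation*}
u(\omega,\mathbf{r}) = \sum_{n=1}^{N(\mathbf{r}_0)} A_{n,0}(\mathbf{r})\,e^{i\omega \phi_n(\mathbf{r})} + \mathcal{O}(\omega^{-1})
\end{equation*}
uniformly on $K_0$; this immediately produces the first term $\omega^{-1}$ in the stated bound.

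Next I Taylor-expand $\phi_n$ around $\mathbf{r}_0$ to second order. Since $\phi_n\in C^2(K_0)$, the remainder satisfies $|\phi_n(\mathbf{r}) - \phi_n(\mathbf{r}_0) - \nabla\phi_n(\mathbf{r}_0)\cdot(\mathbf{r}-\mathbf{r}_0)| \leq C h^2$ on $K_0$, so that
\begin{equation*}
e^{i\omega \phi_n(\mathbf{r})} = e^{i\omega \phi_n(\mathbf{r}_0)}\,e^{i\omega \nabla\phi_n(\mathbf{r}_0)\cdot(\mathbf{r}-\mathbf{r}_0)}\bigl(1 + \mathcal{O}(\omega h^2)\bigr),
\end{equation*}
accounting for the $\omega h^2$ contribution. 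The eikonal equation gives $\nabla\phi_n(\mathbf{r}_0) = \sqrt{\xi(\mathbf{r}_0)}\mathbf{d}_n$, while Theorem~\ref{multiraylemma} supplies $\mathbf{d}_{h,n}$ with $|\mathbf{d}_n - \mathbf{d}_{h,n}| = \mathcal{O}(\omega^{-1})$. Setting $\delta_n = \theta_n - \theta_{h,n}^{post}$ and decomposing $\mathbf{d}_n = \cos\delta_n\,\mathbf{d}_{h,n} + \sin\delta_n\,\mathbf{d}_{h,n}^{\perp}$, I factor
\begin{equation*}
e^{i\omega\sqrt{\xi(\mathbf{r}_0)}\mathbf{d}_n\cdot(\mathbf{r}-\mathbf{r}_0)} = e^{i\omega \tau_{h,n}(\mathbf{r})}\Bigl[\,1 + i\omega\sqrt{\xi(\mathbf{r}_0)}\sin\delta_n\,\mathbf{d}_{h,n}^{\perp}\cdot(\mathbf{r}-\mathbf{r}_0) + \mathcal{O}(h^2)\Bigr],
\end{equation*}
where the first-order Taylor expansion of the extra exponential factor is justified because its argument is bounded by $\omega\,|\mathbf{d}_n-\mathbf{d}_{h,n}|\,h = \mathcal{O}(h)$. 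The decisive observation is that the $\mathcal{O}(1)$ linear correction is a scalar multiple of $\mathbf{d}_{h,n}^{\perp}\cdot(\mathbf{r}-\mathbf{r}_0)$, and therefore lies \emph{exactly} in $\mathrm{span}\{p_1^{\tau_{h,n}}, p_2^{\tau_{h,n}}\} = \mathrm{span}\{1,\,\mathbf{d}_{h,n}^{\perp}\cdot(\mathbf{r}-\mathbf{r}_0)\}$; it can therefore be absorbed into a linear combination of $\varphi_{n,1}$ and $\varphi_{n,2}$ without generating further error.

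It remains to replace $A_{n,0}(\mathbf{r})$ by its value $A_{n,0}(\mathbf{r}_0)$; since $A_{n,0}\in C^1(K_0)$, this incurs a pointwise error bounded by $C h$, yielding the $h$ term. Collecting everything I define
\begin{equation*}
\pi_h u(\mathbf{r}) := \sum_{n=1}^{N(\mathbf{r}_0)} e^{i\omega\phi_n(\mathbf{r}_0)} A_{n,0}(\mathbf{r}_0)\bigl(\alpha_{n,1}\varphi_{n,1}(\omega,\mathbf{r}) + \alpha_{n,2}\varphi_{n,2}(\omega,\mathbf{r})\bigr),
\end{equation*}
with the $\alpha_{n,j}$ chosen to realize the linear absorption described above, sum the three independent error contributions, and pass to the $L^2(K_0)$ norm to conclude (\ref{local-approx1}). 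The main obstacle is this third step: a naive replacement $\mathbf{d}_n \leftrightarrow \mathbf{d}_{h,n}$ inside the highly oscillatory exponential yields an error of order $\omega\,|\mathbf{d}_n-\mathbf{d}_{h,n}|\,h = \mathcal{O}(1)$, which would destroy the rate entirely; the argument hinges on recognizing that this seemingly large piece is \emph{structurally} linear in $\mathbf{d}_{h,n}^{\perp}\cdot(\mathbf{r}-\mathbf{r}_0)$, precisely the one-dimensional polynomial freedom built into the pair $\{\varphi_{n,1},\varphi_{n,2}\}$ by the reduced transport equation $\nabla\tau_{h,n}\cdot\nabla p_j^{\tau_{h,n}}=0$.
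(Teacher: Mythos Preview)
Your proof is correct, but it takes a more elaborate route than the paper and your stated motivation contains a slip. The paper simply chooses $x_1=x_2=\tfrac12\gamma_{K_0}(A_0)$ (the integral average), so that $\pi_h u = \gamma_{K_0}(A_0)\,e^{i\omega\phi(\mathbf r_0)}e^{i\omega\tau_h}$ uses only the \emph{constant} part of $\mathrm{span}\{p_1^{\tau_h},p_2^{\tau_h}\}$, and then bounds the phase mismatch directly: $|e^{i\omega P_1^\theta}-e^{i\omega\tau_h}|\le \omega|P_1^\theta-\tau_h| = O(\omega\cdot\omega^{-1}h)=O(h)$. This is precisely your ``naive replacement,'' and it already delivers the stated rate; your claim that it gives $O(1)$ is a miscomputation, since $\omega\,|\mathbf d_n-\mathbf d_{h,n}|\,h = \omega\cdot O(\omega^{-1})\cdot h = O(h)$ --- indeed you correctly write $O(h)$ for the same quantity two lines earlier. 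Your absorption of the linear $\mathbf d_{h,n}^{\perp}$-correction into the basis is valid and does upgrade the direction-error contribution from $O(h)$ to $O(h^2)$, but because the amplitude replacement $A_{n,0}(\mathbf r)\to A_{n,0}(\mathbf r_0)$ already costs $O(h)$, the improvement is invisible in the final bound. In short: your construction genuinely exploits the second basis function where the paper does not, which is a nice structural observation (and would matter if $A_{n,0}$ were simultaneously approximated to higher order along $\mathbf d_{h,n}^{\perp}$), but for the theorem as stated the paper's one-line phase bound already suffices.
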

\begin{proof} Since the basis functions in $V_{ray}(K_0)$ are independent for different wave directions, we can only consider the case that $u$ has only one wave direction on $K_0$,
i.e., $n=1$. For ease of notation, we simply write (see Subsection 2.1)
\begin{equation}\label{3.optical-ansatz1}
u({\bf r})=A({\bf r})e^{i\omega\phi({\bf r})}
%+\varepsilon(\omega,{\bf r})
%O(\omega^{-1})
,\quad A({\bf r})=\sum\limits_{s=0}^{\infty}(i\omega)^{-s}A_s({\bf r})
\quad {\bf r}\in K_0,
\end{equation}
%where $\varepsilon(\omega,{\bf r})$ satisfies
%$$ |\varepsilon(\omega,{\bf r})|=O(\omega^{-1})\quad \mbox{and} \quad |\nabla\varepsilon(\omega,{\bf r})|=O(1). $$

Let $\theta$ be the exact direction angle defined by $\phi$ and set ${\bf d}_{\theta}=(cos\theta~ \sin\theta)^t$, which satisfies $\nabla\phi({\bf r}_0)=c^{-1}({\bf r}_0){\bf d}_{\theta}$ since
$|\nabla \phi|^2=\xi=c^{-2}$. Then, by the Taylor formula, the phase function $\phi$ can be written as
$$ \phi({\bf r})=\phi({\bf r}_0)+P_1^{\theta}({\bf r})+O(|{\bf r}-{\bf r}_0|^2), \quad {\bf r}\in K_0, $$
where $P_1^{\theta}({\bf r})$ is a linear polynomial of $x-x_0$ and $y-y_0$ and it can be written as
$$ P_1^{\theta}({\bf r})=c^{-1}({\bf r}_0){\bf d}_{\theta}\cdot({\bf r}-{\bf r}_0)=c^{-1}({\bf r}_0)(cos\theta(x-x_0)+\sin\theta(y-y_0)). $$
%Moreover, it follows by the mean value formula that
%$$ A({\bf r})=A({\bf r}_0)+O(|{\bf r}-{\bf r}_0|), \quad {\bf r}\in K_0. $$

Let $\theta_h$ be the approximation direction angle defined in the last subsection, which satisfies
\begin{equation}\label{3.approx1}
|\theta_h-\theta|=O(\omega^{-1}).
\end{equation}
By the definitions of $\tau_{h}$ (see Subsection 2.2), we have $\tau_{h}({\bf r})=P_1^{\theta_h}({\bf r})$ (${\bf r}\in K_0)$
with
$$ P_1^{\theta_h}({\bf r})=c^{-1}({\bf r}_0){\bf d}_{\theta_h}\cdot({\bf r}-{\bf r}_0)=c^{-1}({\bf r}_0)(cos\theta_h(x-x_0)+\sin\theta_h(y-y_0)). $$
Moreover, from the definition of the polynomials $\{p^{\tau_{h}}_{j}\}_{j=1}^2$ (see Subsection 2.2), we have
$$ p^{\tau_{h}}_1({\bf r})=1+{\bf d}^{\bot}_{\theta_h}\cdot({\bf r}-{\bf r}_0)\quad\mbox{and}\quad p^{\tau_{h}}_2({\bf r})=1-{\bf d}^{\bot}_{\theta_h}\cdot({\bf r}-{\bf r}_0). $$
Let $\gamma_{K_0}(A_0)$ denote the integration average of $A_0$ on $K_0$, and define $x_1=x_2={1\over 2}\gamma_{K_0}(A_0)$.
Then
$$ x_1p^{\tau_{h}}_{1}({\bf r})+x_2 p^{\tau_{h}}_{2}({\bf r})=\gamma_{K_0}(A_0). $$
Thus
$$ |A_0({\bf r})-x_1p^{\tau_{h,n}}_{1}({\bf r})+x_2 p^{\tau_{h,n}}_{2}({\bf r})|=|A_0({\bf r})-\gamma_{K_0}(A_0)|, \quad {\bf r}\in K_0,$$
which implies that
\begin{equation}\label{3.approx3}
|A_0e^{i\omega\phi({\bf r}_0)}-e^{i\omega\phi({\bf r}_0)}\sum\limits_{j=1}^2x_jp^{\tau_{h,n}}_{j}|=|A_0({\bf r})-\gamma_{K_0}(A_0)|,\quad \mbox{on}~K_0.
\end{equation}
Define
$$ \pi_h u({\bf r})=e^{i\omega\phi({\bf r}_0)}\sum\limits_{j=1}^2x_jp^{\tau_{h}}_{j}({\bf r})e^{i\omega\tau_{h,n}(\mathbf{r})}. $$
Notice that
$$ u({\bf r})=A_0 e^{i\omega\phi({\bf r}_0)}e^{i\omega P_1^{\theta}+O(\omega h^2)}+O(\omega^{-1}). $$
Then
\begin{eqnarray*}
u({\bf r})-\pi_h u({\bf r})&=&\big(A_0 e^{i\omega\phi({\bf r}_0)}-e^{i\omega\phi({\bf r}_0)}\sum\limits_{j=1}^2x_jp^{\tau_{h}}_{j}\big)e^{i\omega P_1^{\theta}+O(\omega h^2)}\cr
&+&e^{i\omega\phi({\bf r}_0)}\sum\limits_{j=1}^2x_jp^{\tau_{h}}_{j}\big(e^{i\omega P_1^{\theta}+O(\omega h^2)}-e^{i\omega\tau_{h}(\mathbf{r})}\big)+O(\omega^{-1}).
\end{eqnarray*}
Therefore, by (\ref{3.approx3}) we have
\begin{equation}\label{3.approx4}
|u({\bf r})-\pi_h u({\bf r})|\leq C\big(|A_0({\bf r})-\gamma_{K_0}(A_0)|
+|\gamma_{K_0}(A_0)||\tau_{h}(\mathbf{r})-P_1^{\theta}(\mathbf{r})|\big)+O(\omega^{-1}).
\end{equation}
It follows by (\ref{3.approx1}) that
$$ |\tau_{h}(\mathbf{r})-P_1^{\theta}(\mathbf{r})|\leq C\|\phi\|_{H^2(K_0)}(\omega^{-1}h)+O(h^2),\quad \mathbf{r}\in K_0.$$
Substituting this into (\ref{3.approx4}), we obtain
$$ \|u-\pi_h u\|_{L^2(K_0)}\leq C\big(\|A_0\|_{H^1(K_0)}+\|\phi\|_{H^2(K_0)}\big)(h+\omega h^2)+O(\omega^{-1}), $$
which gives the desired result.
\end{proof}
\begin{remark} Under the assumption that $h\sim\omega^{-1}$ and the upper bound of $\|\phi_n\|_{H^2(K_0)}$ and $\|A_{n,s}\|_{H^1(K_0)}$ is independent of $\omega$, the
adaptive plane wave space $V_{ray}(K_0)$ possesses $1-$order convergence with respect to $h$ or $\omega^{-1}$ and has much better convergence than existing
discrete spaces for Helmholtz equations with large wave numbers.
\end{remark}

\subsection{A discretization method of (\ref{nonhomogeneousHelm})}

The adaptive plane wave space $V_{ray}(K_0)$ was designed for homogeneous Helmholtz equations on the element $K_0$. In order to use this space to the discretization of the nonhomogeneous Helmholtz equation (\ref{nonhomogeneousHelm}), we need to adopt the plane wave method combined
with local spectral elements (PW-LSFE) first presented in \cite{Huqy2018}, which was extended to the case with variable wave numbers in \cite{HuW2021}. To shorten the length of this paper,
here we only describe the basic idea of the method (more details can be found in \cite{HuW2021}, where only a different plane wave space was used).

Assume that $f$ is defined in a slightly large domain containing $\Omega$ as its subdomain and the domain $\Omega$ is strictly star-shaped.
As usual, let $\Omega$ be decomposed into the union of some elements $\{\Omega_k\}_{k=1}^N$, which constitute quasi-uniform and and shape-regular triangulations $\mathcal{T}_h$ with mesh sizes $h$.
For each element $\Omega_{k}$, we choose a disc domain $\Omega_{k}^{*}$ that has almost the same size of $\Omega_{k}$ and contains $\Omega_{k}$ as its subdomain.

Let $u_{k}^{(1)} \in H^{1}(\Omega_{k}^{*})$ be the solution of the restriction of the nonhomogeneous Helmholtz equation (\ref{nonhomogeneousHelm}) on $\Omega_{k}^{*}$, with homogeneous Robin boundary condition on $\partial\Omega_{k}^{*}$.
Define $u^{(1)} \in L^{2}(\Omega)$ as $u^{(1)}|_{\Omega_{k}}=u_{k}^{(1)}|_{\Omega_{k}}$ for each $\Omega_{k}$.
Set $u^{(2)}=u-u^{(1)}$, then $u^{(2)}$ satisfies a homogeneous Helmholtz equation on every element $\Omega_k$. Notice that $u^{(2)}$ satisfies two transmission conditions depending on $u^{(1)}$ on the common edge
of two neighboring elements.

Let $m$ be a positive integer, and let $S_m(\Omega_{k}^{*})$ denote the set of polynomials defined on $\Omega_{k}^{*}$, whose orders are less or equal to $m$.
We use $u_{k, h}^{(1)}\in S_m(\Omega_{k}^{*})$ to denote the standard spectral element solution of the nonhomogeneous Helmholtz equation satisfied by $u_{k}^{(1)}$, and
define $u_{h}^{(1)} \in \prod_{k=1}^{N} S_{m}(\Omega_{k})$ by $u_{h}^{(1)}|_{\Omega_{k}}=u_{k, h}^{(1)} |_{\Omega_{k}}$.

We use $V_{ray}(\mathcal{T}_{h})$ to denote the adaptive plane wave space spanned by the local basis functions $\varphi_{n,j}(\omega,{\bf r})$. Namely, the space $V_{ray}(\mathcal{T}_{h})$ is defined as
$$ V_{ray}({\mathcal T}_h)=\{v_h\in L^2(\Omega):~v_h|_K\in V_{ray}(K),~\forall K\in {\mathcal T}_h\}. $$
Let $u_{h}^{(2)} \in V_{ray}(\mathcal{T}_{h})$ be the approximation of $u^{(2)}$, where $u_{h}^{(2)}$ are defined by the discontinuous Galerkin method with
$V_{ray}({\mathcal T}_h)$ for the local homogeneous Helmholtz equation satisfied by $u^{(2)}$.

The final approximate solution $u_h\in L^2(\Omega)$ is defined by $u_h|_{\Omega_k} = u_h^{(1)}|_{\Omega_k} + u_h^{(2)}|_{\Omega_k}$. For convenience, we call this method as ray-GOPWDG-LSFE method.
In most situations, the number $N({\bf r}_0)$ of the wave directions is small, so the space $V_{ray}({\mathcal T}_h)$ has much smaller degrees of freedom than
the standard plane wave space $V_p({\mathcal T}_h)$, where $p$ must increase when $\omega$ increases.
\section{Numerical experience} \label{numerical}
In this section we apply the ray-GOPWDG-LSFE method to solve the nonhomogeneous Helmholtz equations with variable wave numbers
\begin{equation}\label{numnonhomo}
\left\{\begin{aligned}
	&-\Delta u - \omega^2\xi^2 u = f \quad \text{in $\Omega$}, \\
	&(\frac{\partial}{\partial \mathbf{n}} + i\omega\xi) u = g \quad \text{on $\partial\Omega$},
\end{aligned}\right.
\end{equation}
 and we report some numerical results to confirm the effectiveness of the proposed methods.

Let $\Omega$ be divided into small rectangles. Each rectangle has the same mesh size $h$, where $h$ is the length of the longest edge of the elements. The resulting uniform triangulation is denoted by $\mathcal{T}_h$.
We fix the number of elements per wavelength for the experiments made in this subsection, i.e., we choose $h=\omega^{-1}$. Moreover,
we only consider two examples where the ray number is a constant among the whole computed area, then the number of basis functions on every element is fixed.
% The resulting discrete system is solved by the simplest overlapping domain decomposition method without coarse solver.

%In order to illustrate the efficiency of the ray-GOPWDG-LSFE method, we state the number of elements (and degrees of freedom) per wavelength for the experiments made in this subsection. Let $n_e$ denote the number of basis %functions per element.

We introduce the relative $L^2$ error
\begin{equation*}
	\text{Err.} = \frac{\|u_{ex} - u_h\|_{L^2(\Omega)}}{\|u_{ex}\|_{L^2(\Omega)}},
\end{equation*}
where $u_{ex}$ is the analytic solution and $u_h$ is the numerical solution. Define $\delta$ by
\begin{equation*}
	\delta = \frac{\log(\text{Err}_2/\text{Err}_1)}{\log(\omega_2/\omega_1)},
\end{equation*}
which can measure the ``pollution effect" of a numerical method.
%We will compare the ray-GOPWDG-LSFE method and ray-FEM method

%In case of this two methods, the number of the discretized plane wave directions may be different in each element and it locally depends on the number of rays passing by. In this section, we give simplified numerical examples %where the ray number is a constant among the whole computed area. The resulting discrete system are solved by overlapping domain decomposition method.

%Let $n_e$ denote the number of basis functions per element.
Let $\theta(\hat{\mathbf{d}}_{ex})$, $\theta(\hat{\mathbf{d}}^F_h)$ and $\theta(\hat{\mathbf{d}}^G_{\widetilde{\omega}})$ denote the exact ray angles, the numerical ray angles computed in ray-FEM method and
 the numerical ray angles computed by the post-processing method in ray-GOPWDG-LSFE method, respectively. Denote by $u^F_h$ the numerical solution solved by the ray-FEM method. Let $u^G_{\hat{\mathbf{d}}_{\widetilde{\omega}}}$ and $u_{\hat{\mathbf{d}}_{ex}}$ denote the numerical solution solved by the ray-GOPWDG-LSFE method used by the direction $\hat{\mathbf{d}}^G_{\widetilde{\omega}}$ and $\hat{\mathbf{d}}_{ex}$, respectively. Let ``DOFs" denotes the computing complexity of the considered numerical methods.

We compare the performances of the ray-GOPWDG-LSFE method and the ray-FEM method for the high-frequency numbers $\omega = 400, 625, 900$.

\subsection{Example 1 (Single wave in a heterogeneous medium)}\label{example1}
We consider an example in a heterogeneous medium in the domain $\Omega = [0,1]\times[0,1]$ (see \cite{Fomel2009}): Define $\xi(\mathbf{r}) = \frac{1}{c(\mathbf{r})^2}$ with the velocity field $c(\mathbf{r})$  as a smooth converging lens with a Gaussian profile at the center $(x_0, y_0) = (1/2, 1/2)$
$$c(x, y)=\frac{4}{3}\left(1-\frac{1}{8} \exp \left(-32\left(\left(x-r_{1}\right)^{2}+\left(y-r_{2}\right)^{2}\right)\right)\right). $$
The analytic solution of the problem is given by
$$u_{ex} (x, y) = c(x, y)e^{i\omega xy}$$
Then the source term is $f_{ex} = -\Delta u - \omega^2\xi(\mathbf{r}) u$ and the boundary function is chosen as $g_{ex} = (\frac{\partial}{\partial\mathbf{n}} + i\omega)u_{ex}$.

Following the process Step 1-Step 2 in Subsection \ref{multinum}, we first solve the corresponding low frequency-problem $\widetilde{\omega} = 20, 25, 30$ on a coarsen mesh $\widetilde{h}$, respectively.
\begin{figure}[!ht]
\centering
\includegraphics[width=9cm,height=6cm]{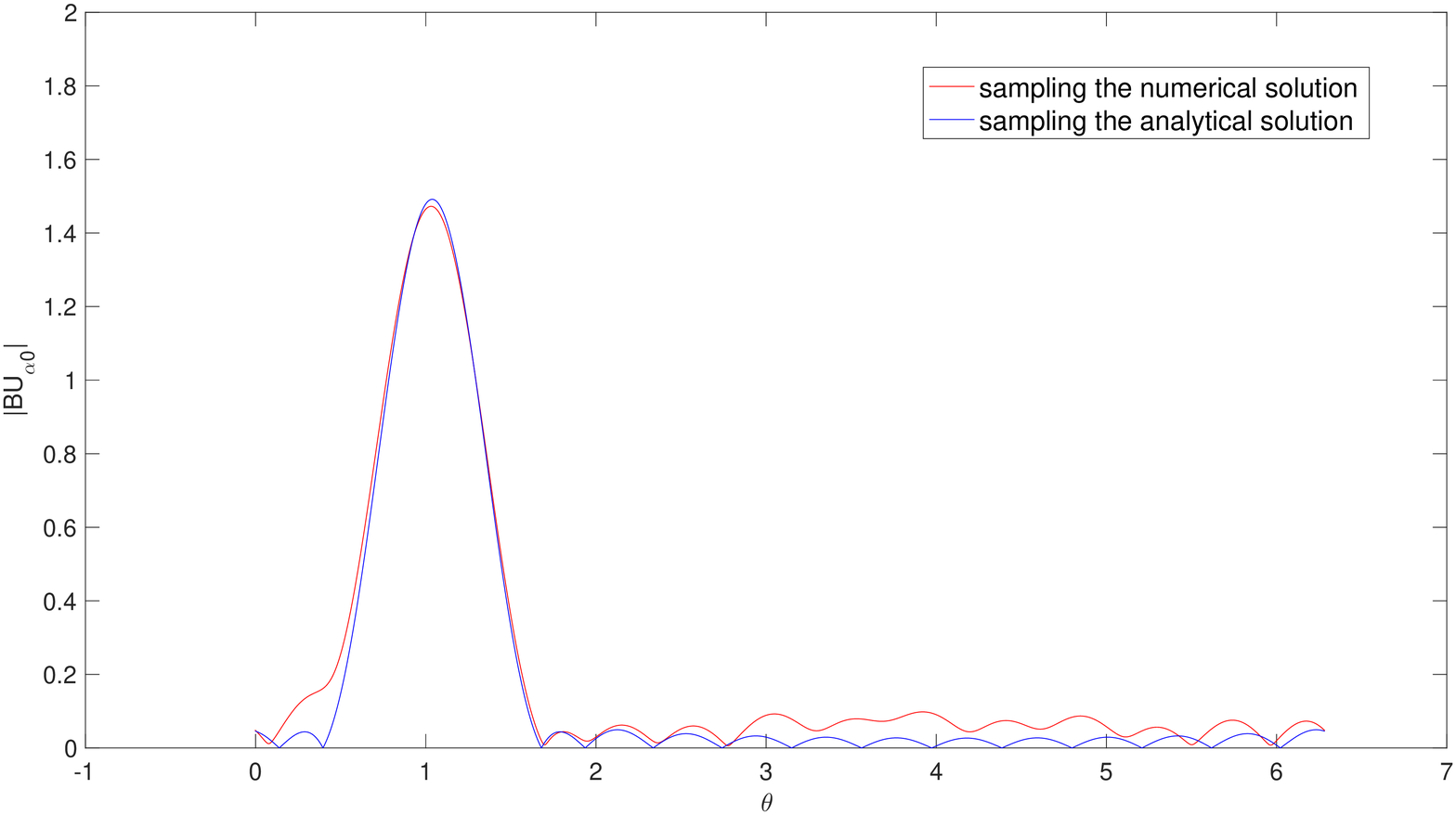}
\caption{NMLA sampling at the location (19/20, 3/20)}
\label{sampling2fig}
\end{figure}
The NMLA sampling results shows that there is only one ray locally at each element (see Figure \ref{sampling2fig}).
We then use the post-processing method locally to calculate the ray directions in each element.

The maximum iteration steps cost by the DLS method are reported in Table \ref{varyDLSiter1}.

\begin{table}[H]
\caption{Maximum iteration steps of the DLS method in the post-processing method}
\label{varyDLSiter1}
\begin{tabular}{cccc}
\hline
$\omega$ & 400 & 625 & 900 \\ \hline
iter(s)  & 33  & 34   & 31    \\ \hline
\end{tabular}
\end{table}

In Table  \ref{1varyraycom2}, we list $L^{\infty}$ errors $\|\theta(\hat{\mathbf{d}}_{\widetilde{\omega}})-\theta(\hat{\mathbf{d}}_{ex})\|_{\infty}$ of the ray angles.

\begin{table}[!h]
\centering
\caption{Complexity and ray angle approximation errors of the ray-GOPWDG-LSFE method and ray-FEM method}
\label{1varyraycom2}
\begin{tabular}{lllllll}
\hline
$\omega$ & Comp. & $\|\theta(\hat{\mathbf{d}}^G_{\widetilde{\omega}})-\theta(\hat{\mathbf{d}}_{ex})\|_{\infty}$ & Order  & Comp. & $\|\theta(\hat{\mathbf{d}}^F_h)-\theta(\hat{\mathbf{d}}_{ex})\|_{\infty}$ & Order\\ \hline
400      &   9.7e+5   	&   4.321e-3       &   $-$    & 1.3e+8&  5.242e-3 & $-$\\ \hline
625     &  2.4e+6    &   2.635e-3       &    1.11      &  4.4e+8&  3.435e-3&   0.95    \\ \hline
900    &  4.9e+6   &    1.839e-3      &     0.98       &  1.2e+9&  2.384e-3 &   1.00\\ \hline
\end{tabular}
\end{table}

The relative $L^2$ errors of the approximated solutions and the total DOFs needed in the ray-GOPWDG-LSFE method are reported in the Table  \ref{1varyrayu2}.

\begin{table}[H]
\centering
\caption{DOFs. and Approximation errors of the ray-GOPWDG-LSFE method and ray-FEM method}
\label{1varyrayu2}
\begin{tabular}{ccccccc}
\hline
$\omega$ & DOFs & $\|u^G_{\hat{\mathbf{d}}_{\widetilde{\omega}}}-u\|_{0, \Omega}$ & Order & DOFs & $\|u_h^F-u\|_{0, \Omega}$ & Order \\ \hline
400        & 6.6e+4 &  5.817e-4    &   $-$          & 1.3e+7 & 7.113e-4      & $-$	      \\ \hline
625        & 1.6e+5 &  3.592e-4    &   1.08        & 4.1e+7 & 4.615e-4      & 0.96        \\ \hline
900        & 3.4e+5 &  1.671e-4    &   1.04        & 6.7e+7  &  3.214e-4     &  0.99       \\ \hline
\end{tabular}
\end{table}

It shows that for the single wave solution, the approximation error of the numerical solutions of the both methods can have an optimal convergence of $\mathcal{O}(\omega^{-1})$. However, the total DOFs of the ray-GOPWDG-LSFE method is $\mathcal{O}(\omega^2)$, which is also optimal in solving the two-dimensional Helmholtz equation and much less than the total DOFs needed by the ray-FEM method.

\subsection{Example 2. (Constant gradient of slowness squared $\xi$)}\label{example2}
We provide an example in a heterogeneous medium with wave speed of constant gradient (see \cite{Fomel2009}): $\xi(\mathbf{r}) = c_0^2+2\mathbf{G}_0\cdot(\mathbf{r}-\mathbf{r}_0)$ with parameters $c_0 = 1$, $\mathbf{G}_0 = (0.1, -0.2)$ and $\mathbf{r}_0 = (-0.1, -0.1)$. Referring to \cite{Fomel2009}, there are two rays crossing in the domain $\Omega = [0,1]\times[0,1]$. The two phase functions are known analytically and they are given by
\begin{equation}
	\phi_j = \bar{c}\sigma_j - \frac{|\mathbf{G}_0|^2}{6}\sigma_j^3, \quad j = 1,2,
\end{equation}
where
\begin{equation}
	\sigma_j = \frac{\sqrt{2(\bar{c} + (-1)^j\sqrt{\bar{c}^2-|\mathbf{G}_0|^2|\mathbf{r}-\mathbf{r}_0|^2})}}{|\mathbf{G}_0|^2}, \quad j = 1,2,
\end{equation}
with
\begin{equation}
	\bar{c} = c_0 + \mathbf{G}_0\cdot(\mathbf{r}-\mathbf{r}_0).
\end{equation}
Then the analytic solution is given by
 \begin{equation}
	u_{ex} = exp(i\omega\phi_1)/(xy+1i) + exp(i\omega\phi_2)/((x^2+y^2+1i)).
\end{equation}
The source term is $f_{ex} = -\Delta u - \omega^2\xi(\mathbf{r}) u$ and the boundary function is chosen as $g_{ex} = (\frac{\partial}{\partial\mathbf{n}} + i\omega)u_{ex}$.

Following the process Step 1-Step 2 in Subsection \ref{multinum}, we first solve the corresponding low frequency-problem $\widetilde{\omega} = 20, 25, 30$ on a coarsen mesh $\widetilde{h}$, respectively.
\begin{figure}[!ht]
\centering
\includegraphics[width=9cm,height=6cm]{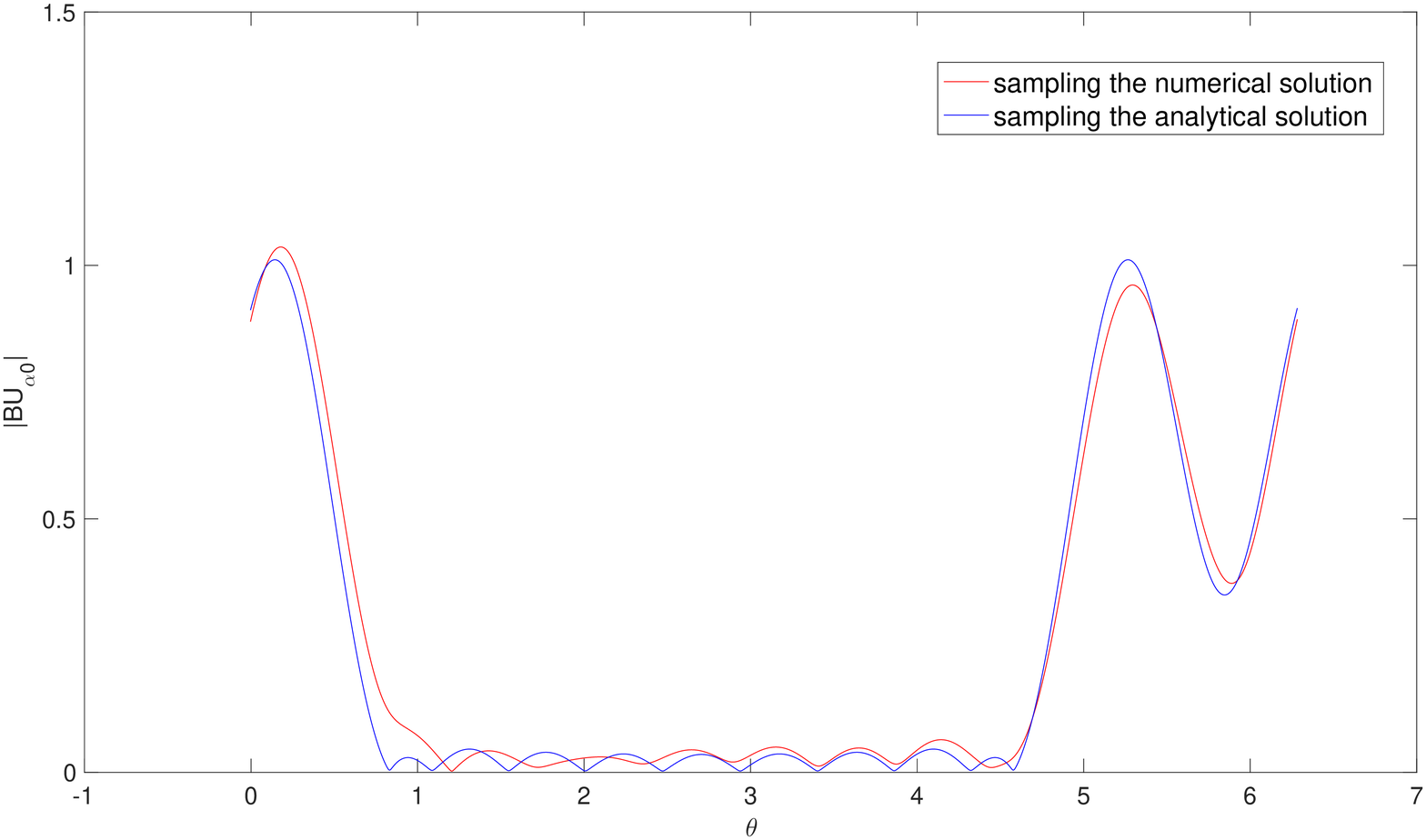}
\caption{NMLA sampling at the location (19/20, 3/20)}
\label{sampling3fig}
\end{figure}
It shows that we can learn nearly-accurate ray directions based on a numerical solution generated by the GOPWDG-LSFE method on the mesh $\widetilde{h}  = \mathcal{O}(\widetilde{\omega}^{-1})$ (see Figure \ref{sampling3fig}).

We then use the post-processing method locally to calculate the ray directions in each element. The maximum iteration steps needed in the DLS method are listed in Table \ref{varyDLSiter}.

\begin{table}[!h]
\caption{Maximum iteration steps of the DLS method in the post-processing method}
\label{varyDLSiter}
\begin{tabular}{llll}
\hline
$\omega$ & 400 & 625 & 900 \\ \hline
iter(s)  & 36  & 35   & 33    \\ \hline
\end{tabular}
\end{table}

The $L^{\infty}$ errors of the ray directions are reported in Table \ref{varyraycom3}.
\begin{table}[!h]
\centering
\caption{Complexity and ray angle approximation errors of the ray-GOPWDG-LSFE method and ray-FEM method}
\label{varyraycom3}
\begin{tabular}{lllllll}
\hline
$\omega$ & Comp. & $\|\theta(\hat{\mathbf{d}}^G_{\widetilde{\omega}})-\theta(\hat{\mathbf{d}}_{ex})\|_{\infty}$ & Order  & Comp. & $\|\theta(\hat{\mathbf{d}}^F_h)-\theta(\hat{\mathbf{d}}_{ex})\|_{\infty}$ & Order\\ \hline
400      &   1.3e+6   	&   8.433e-3       &   $-$    & 1.9e+8&  1.205e-1 & $-$\\ \hline
625     &  3.1e+6    &   5.133e-3       &    1.11      &  6.3e+8&  9.238e-2&   0.60    \\ \hline
900    &  6.5e+6   &    3.482e-3      &     1.06       &  1.7e+9&  7.662e-2 &   0.51\\ \hline
\end{tabular}
\end{table}

The relative $L^2$ errors of the approximated solutions and the total DOFs needed of the ray-GOPWDG-LSFE method are shown in the Table  \ref{varyrayu3}.
\begin{table}[H]
\centering
\caption{Dofs. and approximation errors of the ray-GOPWDG-LSFE method and ray-FEM method}
\label{varyrayu3}
\begin{tabular}{ccccccc}
\hline
$\omega$ & DOFs & $\|u^G_{\hat{\mathbf{d}}_{\widetilde{\omega}}}-u\|_{0, \Omega}$ & Order & DOFs & $\|u_h^F-u\|_{0, \Omega}$ & Order \\ \hline %& $\|u_{\hat{\mathbf{d}}_{ex}}-u\|_{0, \Omega}$ & Order \\ \hline
400        & 1.3e+6 &  3.934e-4    &   $-$          & 1.9e+7 &3.391e-3      & $-$	      \\ \hline%&       2.609e-4  & $-$
625        & 3.1e+6 &   2.327e-4   &   1.176      &  6.3e+7 &2.516e-3      & 0.67        \\ \hline%&       1.671e-4 & 1.00
900        & 6.5e+6 &   1.562e-4   &   1.094      &  1.7e+7 &2.049e-3      & 0.56        \\ \hline%&        1.159e-4 & 1.00
\end{tabular}
\end{table}

It shows that our ray-GOPWDG-LSFE method are more efficient than the ray-FEM method for high-frequency Helmholtz problem and the relative $L^2$ errors have the optimal convergence of $\mathcal{O}(\omega^{-1})$. The total DOFs of the ray-GOPWDG-LSFE method is $\mathcal{O}(\omega^2)$, which is also optimal in solving the two-dimensional Helmholtz equation.
%However, the ray-FEM method possesses much more total DOFs and only obtains an approximated error of $\mathcal{O}(\omega^{-1/2})$.

\section{Conclusion}
In this paper we have introduced an adaptive ray-based GOPW method for the high-frequency Helmholtz equation in smooth media. We have developed different ray-learning method for the single wave as well as the multiple wave. We have derived an interpolation error of the ray-GOPW spaces. The numerical results shows that the ray GOPW method can achieve asymptotic convergence rate of $\mathcal{O}(\omega^{-1})$ for multiple waves. The computing complexity can be also optimal as $\mathcal{O}(\omega^2)$ in two dimensions.

\bibliographystyle{siamplain}

\end{document}